\def\N{\mathds{N}}
\def\P{\mathds{P}}
\def\R{\mathds{R}}
 \def\X{\mathcal{X}}
 \def\Y{\mathcal{Y}}
\def\Z{\mathds{Z}}
\newcommand{\indicator}[1]{\mathds{1}_{\{#1\}}}
\theoremstyle{plain}
\newtheorem{thm}{Theorem}[section]
\newtheorem{prop}{Proposition}[section]
\newtheorem{corollary}{Corollary}[section]
\theoremstyle{definition}
\newtheorem{remark}{Remark}[section]
\begin{document}

\title{Metastability in communication networks}

\author[D.\ Tibi]{\vspace{-0.5cm}Danielle Tibi}
\address{\vspace{-0.2cm} LPMA, 
Universit\'e Paris 7,  \\case 7012, 175 rue du Chevaleret, 75013 Paris, France}
\email{danielle.tibi@math.jussieu.fr}



\date{\today}

\bibliographystyle{plain}

\begin{abstract} 
Two  models of  loss networks,  introduced by Gibbens \emph{et al.} ~\cite{Gibbens:90} and  by Antunes \emph{et al.} ~\cite{Antunes:08},  are known to exhibit a mean field limiting regime with several stable equilibria. 

These models  are reexamined in the light of Freidlin and Wentzell's large deviation approach of randomly perturbed dynamical systems. Assuming that some of their results still hold under slightly relaxed conditions, the metastability property is derived for both systems.

A Lyapunov function  exhibited in  ~\cite{Antunes:08} is next identified with the quasipotential associated with a slightly modified, asymptotically reversible,  Markovian perturbation of the same dynamical system.

Another interpretation,  in terms of entropy dissipation, of the Lyapunov function in   ~\cite{Antunes:08} is finally given.  The argument  extends to another, similar but closed model.

\end{abstract}

\maketitle

\section{Introduction}

  Metastability has given rise to a profuse literature  in the Statistical Physics  and Probability Theory areas during the last decades. It concerns a wide range of models, from  the  Curie-Weiss model to lattice  gas and spin    models   and to    diffusion processes. See ~\cite{Olivieri:05}  and ~\cite{Bovier:06}  for an overview of the subject. Metastability can be roughly described as the phenomenon  occuring when a physical system stays a very long time  in some abnormal state before reaching  its normal - under the prevailing conditions - equilibrium state. The normal  situation is only restored after, under some random perturbation or some other external provision of energy, the system can get over some energy barrier. Mathematically, it is usually formalized through exponential growth of some  exit times  under some asymptotic, supposed to correspond to the physics of the system. 
  
  While the potential theoretic approach, more recently developed (see~\cite{Bovier:06}), provides sharp estimations on crossover times for  reversible dynamics,  a lot is still due to the large deviation approach developed by Freidlin and Wentzell in ~\cite {Freidlin:84} (see  ~\cite{Olivieri:05}  for an outline of  the application range). It has lead to some quite complete descriptions of the  \emph{exit path}  from a metastable state, e.g.~for the Ising model under Glauber dynamics at low temperature.  Yet again, reversibility is often crucial,  even in this context.
  
   Metastability  is  expected to occur for some  specific models in communication networks, but a formal proof is still lacking.  Namely, ~\cite{Gibbens:90} and ~\cite{Antunes:08} analyze two loss systems with local interactions, that admit a mean field limit as the number of nodes goes to infinity. The  model in  ~\cite{Gibbens:90} is  ruled  by an alternative routing  procedure for blocked calls, adapted from  ~\cite{Marbukh:85}, while   ~\cite{Antunes:08} analyzes a simple model for a loss network with mobile customers.  In both papers, the limiting dynamics are shown to exhibit a phase with several stable equilibrium points.  ~\cite{Gibbens:90} provides estimations of certain exit times for a one dimensional diffusion approximation of the model, suggesting that metastability occurs. This should also be  the case for the model in ~\cite{Antunes:08}, as suggested by simulations. 
   
  Yet, much less is known for these network models than for the  classical examples  cited above. In particular, their Markovian dynamics are  not reversible and computing the invariant distributions is out of reach. Such useful quantities as Hamiltonian, energy barrier etc... are thus not available. However, it is  expected that as the number $N$ of nodes grows, the invariant distribution  should be approximately given by  $ Z_N^{-1} \exp (- N  h)$ for some energy function $h$ that would play the role of the Hamiltonian in the limit.  In   ~\cite{Antunes:08} a Lyapunov function for the limiting dynamical system is exhibited (and used for proving multistability), but this function has no reason to describe the correct energy landscape. In particular, it is not known which equilibrium points are asymptotically relevant in the invariant distribution (i.e., correspond to  global minima of the possible Hamiltonian).
  \\
  
   The main purpose of the present paper is to show that the models in ~\cite{Gibbens:90} and ~\cite{Antunes:08} essentially  fit the scheme of Freidlin and Wentzell (\cite {Freidlin:84}), as - in these authors' terminology -    \emph{locally infinitely divisible processes}.  This is the object of Section ~\ref{sec:quasipot}. As a result, exponential growth   (as $N$  gets large) of exit times from neighborhoods of stable equilibrium points,  that is,  metastability is obtained.    The location of the exit points can also  be described. 

 It must be  pointed out that the large deviations results  stated  in ~\cite {Freidlin:84}   are not rigorously applicable  to the processes of interest in this paper, since these evolve in some compact subset of  $\R^d$, on the frontier of which some of the technical hypothesis required in ~\cite {Freidlin:84}  are no longer valid.  Yet,  it  seems  that there is  no fundamental reason for these restrictions. Extension of the  results  in ~\cite {Freidlin:84} to this slightly more general  context will thus  be  used without proof. Such a   proof
 is  beyond the scope of this paper, that  aims at opening a way for understanding  the  stochastic behavior of  systems for which only the deterministic limiting evolution has been described so far.

  A second issue of this paper is to decrypt  the Lyapunov function  exhibited in ~\cite{Antunes:08}. Two answers are given in this direction. One, presented in Section ~\ref{sec:quasipot}, is related to  the  \emph{quasipotential}  introduced in ~\cite {Freidlin:84}, which is  the crucial quantity involved in  estimations of exit times and  exit points. Moreover, as suggested in ~\cite {Freidlin:84} from  the thoroughly analyzed case of diffusion processes, when there is just one equilibrium point, the quasipotential   should  represent the underlying energy function associated with the stationary state.  It is here   proved that the Lyapunov function  exhibited in ~\cite{Antunes:08}  coincides with the quasipotential  of a slightly modified version of the process of interest. In addition, a heuristic argument  suggests  that  the modified process is \emph{asymptotically reversible} as the number of nodes gets large.

 Section ~\ref{sec:lyapunov} is   devoted to   another interpretation of this Lyapunov function,  related to the well known decrease  of   relative entropy  along a semi-group. This relies on a very particular feature of the model in ~\cite{Antunes:08}. Yet, two  other  models  (particular in some other sense) can then be introduced, to which this principle for  obtaining  a Lyapunov  function can  be exported. This  helps proving convergence of their invariant distribution to a Dirac mass.

    Section ~\ref{sec:mean_field} recalls  the two  models  of interest  and  main  results from  ~\cite{Gibbens:90} and ~\cite{Antunes:08}.

\section{Mean field limits, multistability} \label{sec:mean_field}
This section gives a short review of the models  in  ~\cite{Gibbens:90}  and ~\cite{Antunes:08} and  of the main results therein. These models exhibit a mean field limiting dynamics of some generic form, of which two other examples will be given in Section~\ref{sec:lyapunov}. But only the two models from  ~\cite{Gibbens:90}  and ~\cite{Antunes:08} exhibit a phase with several stable points.

\subsection{Multistability due to rerouting}\label{subsec:GHK}

A first example of bistability in queueing networks is given in    ~\cite{Gibbens:90}.    The analysis is formalized through   convergence of some family of empirical processes  to some dynamical system. All the subsequent examples of this section and Section~\ref{sec:lyapunov}  will fit this  frame. 

 This first  model is a simplified version  of a network with  alternative routing  proposed in ~\cite{Marbukh:85}.  In ~\cite{Anantharam:91}, a lattice version,  with long range rerouting, is proposed.

The model in ~\cite{Gibbens:90} is the following: The  network  considered consists of  $N$ nodes offering the  same finite (integer) capacity $C \ge 1$. Customers enter the network at the different nodes according to $N$ independent  Poisson processes with intensity $\lambda >0$. When some customer arrives at some node where no more than $C-1$ customers are present, he  occupies one unit of capacity for an exponentially distributed service time with mean one. When some customer arrives at some saturated node, he is rerouted to two other nodes, chosen uniformly among the $N-1$ possible nodes. If both chosen nodes have one unit of capacity available, the customer then behaves like two independent customers, leaving the nodes after two independent exponential  times with mean one.  In the contrary (i.e., if at least one of the two nodes is saturated), the customer is definitively rejected from the system.

Due to symmetry with respect to the $N$ nodes, the quantity of interest is the empirical distribution of the nodes as function of time, that is
\begin{align} \label{eq:Y} Y^N(t)=\frac{1}{N} \sum _{i=1} ^N \delta _{X^N_i(t) },\end{align}
where $X_i^N(t)$ denotes the number of customers present at node $i$ at time $t$ and $\delta _x$ is the Dirac mass at $x$.

The  $X^N_i(t)$ ($N \ge 1$, $t\ge 0$  and $i=1, \dots ,N$) evolve in the set $\X = \{ 0, \dots ,C\}$. Identifying the set $\mathcal P(\X)$ of probability measures on $\X$ with the set
\[ \Y= \left \{ y=(y_0, \dots , y_C) \in [0,+ \infty[^{C+1}: \sum _{n=0} ^C y_n=1\right \},\]
 where $y_n$  represents the mass at $n$  ($n=0, \dots ,C$)  of the probability measure $y$,
one can write $Y^N(t)= (Y^N_n(t), 0 \le n \le C)$, with
\begin{align} \label{eq:Y_n} Y^N_n(t)=\frac{1}{N} \sum _{i=1} ^N \indicator {X^N_i(t) =n }.\end{align}   
In other words, $Y^N_n(t)$ is the proportion of  nodes that are in state $n$ at time $t$.\\

For any fixed $N\ge 3$, $(Y^N(t), t\ge 0)$ is a  Markov jump process with the following transitions and rates, where $e_n$ denotes the  $n^{th}$ unit vector in $\R^{C+1}$:

\[ y \longrightarrow \left \{ \begin{array}{l}  y+\frac{1}{N} (e_{n+1}-e_n) \\ \\
 y+\frac{1}{N} (e_{n-1}-e_n)\\
 \\ y+\frac{1}{N} \big( e_{m+1}-e_m  +e_{n+1}-e_n\big) \\
  \\y+\frac{2}{N} \big(e_{n+1}-e_n\big)
\end{array} \right.
\text{at rates}  \begin{array}{ll}
 \lambda  N y_n  & (0 \le n \le C-1) \\
\\ Nn y_n & (0 < n \le C)\\
\\ 2  \lambda  \frac{   N^3 y_C y_ny_m}{(N-1)(N-2)}  & (0 \le m \neq n \le C-1)\\ 
\\ \lambda   \frac{N^2  y_C y_n (Ny_n-1)}{(N-1)(N-2)} &(0 \le n \le C-1)

\end{array} \]

The first jump corresponds to some arrival  at some node with $n$ customers, the second one to some departure from some node with $n$ customers and the two last jumps  correspond to rerouting of some customer to two nodes with, respectively,  different or equal numbers of customers. \\

~\cite{Gibbens:90} proves that  for any $T>0$, the process $(Y^N(t), 0 \le t \le T)$ converges  in distribution, as $N$ goes to infinity, to the solution  with initial value $y(0)$ of the following  differential system of equations: for $n=0, \dots ,C$
\begin{multline*} y_n'(t)=\lambda \Big( 1+2y_C(t)(1-y_C(t)\Big) y_{n-1}(t) \mathds 1_ {n\ge 1} + (n+1) y_{n+1} (t) \mathds 1_ {n\le C-1}
 \\ - \Big(\lambda (1+2y_C(t)(1-y_C(t)) +n \Big) y_n(t).
\end{multline*}
 provided that $Y^N(0)$ converges in distribution to $y(0)$.

  The vector field characterizing the limiting dynamical system, given by the right hand sides of this differential system, can be heuristically  obtained by computing  the infinitesimal  mean jump  from position $y$ (suming up the  jump amplitudes multiplied by the corresponding rates) and letting $N$ grow to infinity.
  
 Introducing the family of infinitesimal generators $(L_y, y\in \Y)$ defined by
 \[ L_yf(n)=  \lambda (1+2 y_C(1-y_C))(f(n+1) -f(n)) \mathds 1_ {n\le C-1} + n(f(n-1)-f(n)) \quad (f \in \R ^{ \X}, n \in \X),\]
  the above differential
  system  rewrites as one unique differential equation on $\Y$:
\begin{align} \label{eq:S} \dot y = y L_y,
\end{align}
where the second member is the product of probability measure (or row vector) $y$   by the infinitesimal generator  (or rate matrix) $L_y$.  For $y\in \Y$,  $L_y$ is the generator of an $M/M/C/C$ queue with arrival rate $  \lambda (1+2 y_C(1-y_C))$ and service rate $1$.

 Equation ~\eqref{eq:S} precisely conveys the mean field property of the model: it tells that, in the limit $N \to \infty$, the empirical distribution $y(t)$ of the nodes evolves  in time as the distribution of some non-homogeneous Markov process on $\X=\{0, \dots ,C\}$, whose jump rates at time $t$ are given by $L_{y(t)}$, being  hence constantly updated according to  the current distribution, or ``mean field'' $y(t)$. These jump rates are those of an $M/M/C/C$ queue. Only the arrival rate  is  time dependent, and  more precisely  depends on  $y_C(t)$, that is, on the proportion of saturated nodes. This $M/M/C/C$ queue can be viewed as  representing the instantaneous evolution of a ``typical node'' under the global influence of the other nodes.  Due to symmetry,  for large $N$, this virtual node  summarizes  the  whole network.

All the forecoming examples (Sections~\ref{sec:mean_field} and~\ref{sec:lyapunov}) will follow this  scheme, with different types of dependency  on $y$ of the mean field arrival rate.
\\

The equilibrium points of the limiting dynamical system ~\eqref{eq:S}  are the solutions of $y L_y=0$. Since for all $y$, $L_y$ is the generator of some  ergodic Markov process on $\X$,  this means that $y$ is equal to  the unique invariant distribution associated to $L_y$.

The $M/M/C/C$ queue with arrival rate $\rho$ and service rate $1$  is known to be reversible,  having invariant distribution given by the well known Erlang distribution:
\begin{align} \label{eq:erlang} \nu _{\rho}(n)=\frac{1}{Z(\rho)}\frac { \rho ^n}{n!} \quad (n=0, \dots , C),
\end{align}
where $Z(\rho)=\sum _{n=0}^C \rho ^n/n!$ is a normalizing constant.  Equilibrium points are thus given by the solutions of the  fixed point equation
\begin{align}\label{eq:fix} y= \nu _{\rho(y)},
\end{align}
where $\rho (y)= \lambda (1+2 y_C(1-y_C))$.

~\cite{Gibbens:90} shows numerically that for certain values of $\lambda$, this equation  exhibits several solutions, namely three, among which two are stable  and one is unstable.
This suggests that for $N$ large and for  suitable values of $\lambda$, the system should be attracted to one  of two possible  states, both of the Erlang form but with two different  values of $\rho$.  Intuitively, the system  can either fall into  a heavy loaded regime  or into  a light loaded one. In the first one, the heavy load  maintains itself by inducing many  reroutings, while in the opposite way, a small proportion of saturated nodes maintains a low rate of rerouting.

\begin{remark}
A slight variant of this model consists in rerouting customers to only one other node, but instead, changing their service rate from $1$ to some value $\mu <1$. One can prove that multistability still occurs for well chosen values of $\mu$ and $\lambda$. Notice that, in order to preserve the Markov property, it is  here necessary to introduce two types of customers: those with service rate $1$ and the rerouted ones, with service rate $\mu$.  

\end{remark}

\subsection{Multistability due to coexistence} \label{subsec:AFRT} 
A second example of multistability in the networks context is given by ~\cite{Antunes:08}.  One major difference with the previous model is that here multistability can  occur only when different classes of customers coexist having different capacity requirements. Besides that, the model  deals with  mobile customers travelling from one node to another  (and being possibly rejected during their service).

Here again, the network consists of $N$ nodes with capacity $C$,  now offered to $K$ different  classes of customers. For  $k=1, \dots ,K$, each customer of class $k$ occupies $A_k$ units of capacity at each node he visits ($1 \le A_k \le C$). Class $k$ customers arrive at each  node  according to some Poisson process with intensity $\lambda_k$, and have service times  exponentially distributed with parameter $\mu _k$. When some class $k$ customer arrives at some node where his capacity requirement is not available, he is definitively rejected from the network. Otherwise, he begins to be served at this node, and then moves at rate $\gamma _k$ during service. At each move, a new node is chosen uniformly among the $N-1$ possible nodes. Customers  either leave the system through rejection at some node along their route, or through end of service. 

All arrival processes, service durations and sojourn times of customers at the different nodes are assumed independent.

The empirical distribution $Y^N(t)$  of the nodes at time $t$ is still given by equation ~\eqref{eq:Y}, but here the state   $X_i^N(t)$ of node $i$ at time $t$  is $K$-dimensional, consisting of the different numbers of customers of each class present at node $i$ at time $t$. The state spaces  $\X$ of variables $X^N_i(t), 1 \le i \le N, t\ge 0,$ and  $\Y$ of $Y^N(t), t\ge 0,$ are now
\[  {\mathcal X} = \{ n\in \N^K : \sum _{k=1}^K n_k A_k \leq C \} \]
 \[ \text{ and } \quad \Y= \mathcal P (\X)= \left \{ y=(y_n,n \in \X) \in [0,+ \infty[^{\X}: \sum _{n\in \X}  y_n=1\right \}.\]
Here again  $Y^N(t)=(Y^N_n(t), n\in \X)$ for $t \ge 0$, where $Y^N_n(t)$ is  given by equation ~\eqref{eq:Y_n}.
Note that  the process $Y^N$ actually evolves in some finite subset  $\Y ^N$ of $ \Y$:
\[ \Y ^N =   \{y=(y_n,n \in \X) \in \Y: Ny_n \in \N \text{ for } n\in \X \} .\]

 $Y^N$ is a Markov jump process with   the following transitions, where  $f_k$ denotes the $k^{th}$ unit vector in $\R ^K$ ($k=1, \dots ,K$) and  $e_n$ is the $n^{th}$ unit vector in $\R^{\X}$  ($n \in \X$):
for $1 \leq k\leq K$, $n,m \in \X$,
\[ y \longrightarrow \left \{ \begin{array}{l}  y+\frac{1}{N} (e_{n+f_k}-e_n) \\ \\
 y+\frac{1}{N} (e_{n-f_k}-e_n)\\
 \\ y+\frac{1}{N}  \big( (e_{m+f_k}-e_m)  {\mathds 1}_{m+f_k \in {\mathcal X}} +e_{n-f_k}-e_n\big) 
 \end{array} \right.
\text{at rates}  \begin{array}{l}
 \lambda _k N y_n {\mathds 1}_{n+f_k \in {\mathcal X}}  \\
\\ \mu _k Nn_k y_n \\
\\  \frac{\gamma _k N  n_k y_n}{N-1}(Ny_m-{\mathds 1} _{m=n})
\end{array} \]
The first jump corresponds to the arrival of some class $k$ customer at some node in state $n$, the second one to the
end of service  of some class $k$ customer at some node in state $n$, and the last one to a move of some class $k$ customer from some node in state $n$ to  some node in state $m$ (possibly saturated, implying rejection).  \\
 
It is proved in ~\cite{Antunes:08} that for $T>0$, the process  $ (Y^N(t), 0 \le t \le T)$ converges in distribution  to  $ (y(t), 0 \le t \le T)$
 solving the differential system of equations in $\Y$: 
 \[ y'_n(t)=\sum _{k=1}^K \Big[ \Big( \lambda _k+\gamma _k [ I_k,y(t)] \Big)  y_{n-f_k}(t) {\bf 1}_{n_k\geq 1} +(\mu _k +\gamma _k)(n_k+1) y_{n+f_k}(t) {\bf 1}_{n+f_k\in {\mathcal X}}\]
  \[-\Big( (\lambda _k +\gamma _k  [ I_k,y(t) ])  {\bf 1}_{n+f_k\in {\mathcal X}}+(\mu_k +\gamma _k)n_k\Big) y_n(t) \Big] \quad (n\in \X),\]
 if $Y^N(0)$ converges in distribution to  $y(0)$.
  
 Here $\displaystyle{ [ I_k,y ]= \sum _{n\in {\mathcal X}} n_k y_n}$  is the  mean of  the $k^{th}$ marginal of $y$ (in particular $ [ I_k,Y^N(t) ]$  is the   number  of class $k$ customers per node  at time $t$ in the network, that is, the density of class $k$ customers present). Convergence in distribution refers to the Skorohod topology on $\mathcal D([0,T])$.\\

This system can  here again be written as ~\eqref{eq:S} where  $L_y$ is now the infinitesimal generator of an $M/M/C/C$ queue with $K$ classes of customers having different arrival rates $ \lambda _k+\gamma _k [ I_k,y]$, service rates  $\mu _k +\gamma _k$ and capacity requirements $A_k$ ($k=1, \dots ,K$).

Note that  in ~\cite{Dawson:05}  a similar mean field limiting dynamics, described by a non linear equation in the form of \eqref{eq:S}, is obtained for a system of $N$ interacting queues. The  $L_y$ involved are birth and death  generators with parameters depending  on the mean of $y$, as in our present case.  However, a major difference with the  present model is  that the stochastic dynamics  itself involves a mean field interaction:  The arrival rates of customers in the network depend on the global density of occupation. On the contrary, in the  model from  ~\cite{Antunes:08},  interaction between customers is \emph{local} (only due to saturation at some node) and the mean field evolution only appears in the limit. 
 \\

Erlang formula ~\eqref{eq:erlang} for the invariant distribution of  the $M/M/C/C$ queue extends to the case of  $K$ classes of customers with capacity requirements $A_k$ and arrival rate-to-service rate ratios $\rho _k$ ($k=1, \dots ,K$), where $n!$ and $\rho ^n$  ($n \in \X$)  now hold for
\[n!=\prod _{k=1}^K n_k! \quad \text{ and } \rho ^n= \prod _{k=1}^K \rho _k ^{n_k} \quad (n=(n_1, \dots ,n_K) \in \X).\]
  $Z(\rho)$ is here given by $\displaystyle{ Z(\rho)= \sum _{n \in \X} \frac{\rho ^n}{n!}}$. Equilibrium points are then again characterized by ~\eqref{eq:fix}, where $\rho(y)$ is now multidimensional: $\displaystyle{\rho(y)= \left  (\frac{ \lambda _k+\gamma _k [ I_k,y] }{\mu _k + \gamma _k}\right )_{ 1 \le k \le K }}$.\\ 

In ~\cite{Antunes:08}, coexistence of several equilibrium points is  proved to occur when $K=2$, $A_1=1$ and $A_2=C$, for $C$ sufficiently large and  for certain values of parameters $\lambda _k, \mu _k$ and $\gamma _k$ ($k=1,2$). A key argument  is the determination of a Lyapunov function for dynamical system ~\eqref{eq:S}, that is, some continuously  differentiable, bounded from below, function $g$  defined on $[0,  + \infty[^{\X}$  such that 
\[ y L_y \nabla g(y)  \le 0 \quad (y \in \Y \subset [0,  + \infty[^{\X}),\]
where equality holds only if $yL_y=0$, i.e., if $y$ is an equilibrium point of the dynamics.

$g$ is explicitely given by
\begin{align} \label{eq:g} g(y)= \sum _{n \in \X} y_n \log(n!y_n) - \sum _{k=1} ^ K \int _0^{ [ I_k,y ]} \log \frac{ \lambda _k+\gamma _k x}{\mu _k +\gamma _k} dx.\end{align}
Moreover, $g$ satisfies: for $y \in \Y$, 
 \[  yL_y=0 \Longleftrightarrow \nabla g(y)\perp \Y ,\]
 so that equilibrium points are characterized as the critical points of $g_{|\Y}$. An analytic function argument shows that these critical points are isolated.

These properties of $g$ allow one to discriminate  \emph{stable} (local  minima of $g$) from  \emph{unstable} (local maxima and saddle points of $g$) equilibrium points.

Multistability is then proved (for certain values of the  parameters) by proving existence of a saddle point for $g$, and then showing that two local minima are necessarily present, one on each side of some line crossing the saddle point.\\

Besides the  multistability issue, a Lyapunov function is a tool for showing that  equilibrium points of the limiting  dynamical system are the concentration points, as $N \to \infty$, of the invariant measures $\pi^N$ of processes $Y^N$ (note that $Y^N$ is an irreducible  Markov jump process on  the finite state space $\Y^N$, so that it admits a unique invariant measure denoted $\pi ^N$). This means in some sense commutation of limits as $N \to \infty$ and  $t\to + \infty$. 

 More precisely, it is proved in ~\cite{Antunes:08} that the infinitesimal generator  $\Omega ^N$ of $Y^N$ converges to the generator  $\Omega $ of  the limiting (degenerated) Markov process given by ~\eqref{eq:S}. $\Omega$  is defined, at any $C^1$ function $f$ on $\R^{\X}$, by
\[\Omega f(y)=  yL_y \nabla f(y).\]
 It is then  standard  that any weak limit of the sequence $(\pi ^N)$ is an invariant measure for $\Omega$.
The Lyapunov function then makes it possible to show   that the  invariant measures of $\Omega$, hence the weak limits of $(\pi ^N)$, are precisely the convex combinations of Dirac masses at equilibrium points of   ~\eqref{eq:S}.  In particular, when the  equilibrium point $\bar y$ is unique,  $\pi^N$  converges to the Dirac mass  at $\bar y$.
We refer to ~\cite{Antunes:08}  for   details, or to the proof of Proposition ~\ref{prop:closed} hereafter.

\section{Large deviations, quasipotential} \label{sec:quasipot}

As just recalled,  the Lyapunov function $g$ associated with the model in ~\cite{Antunes:08} helps describing the weak limits of the stationary distributions $\pi ^N$ of  the corresponding processes $Y^N$. Such a function is not available  in the  case of ~\cite{Gibbens:90}. Indeed, Sections ~\ref{sec:quasipot} and ~\ref{sec:lyapunov} will emphasize the specificity of the model of  ~\cite{Antunes:08} that makes the explicit formulation  of $g$ possible.

 Note however that the previous analysis of the model in ~\cite{Antunes:08} does not tell which stable equilibria remain significant in the limit, in the multistable phase. It is not even proved that  the weak limits of $\pi ^N$  have positive mass only  at  \emph{stable} equilibrium points.\\

In this respect, a more precise issue  would be to find, for both models of interest, an {\em energy function}  $h$ describing  their invariant distribution $\pi ^N$ as $N \to \infty$ in the sense that 
\[ \pi ^N(y) \approx \frac {e^{-N h(y)}}{Z_N} \quad \mathrm { for } \  y \in \mathcal Y ^N.\] 
Such an $h$ is expected to be a Lyapunov  function for the limiting dynamical system, or at least to satisfy $  yL_y \nabla h(y)  \le 0$ (see Remark ~\ref{remark:balance} below).

This  amounts to stating a  Large Deviation Principe for measures $\pi ^N$ (with action functional $I(y)= h(y)-\min_{_{\mathcal Y}} h$).  Global minima of $h$  would  then provide the concentration points of $\pi ^N$ in the limit  (the global minimum should be unique, for most values of the parameters).\\ 

 The forecoming analysis will not  directly address this question. 
We will focus on another issue, which is not elucidated in ~\cite{Antunes:08}: Metastability. In  ~\cite{Gibbens:90} metastability is proved to hold  for a rough  one-dimensional diffusion approximation of the model considered.  It tells that as $N$ gets large, the process stays trapped for some long time, of exponential order in $N$, in the neighborhood  of any  stable equilibrium point (regardless of its  asymptotic significance in the invariant distribution).

The present section will show  that both models in  ~\cite{Antunes:08} and ~\cite{Gibbens:90} exhibit a metastable behavior, as a consequence  of the  Large Deviations results of Freidlin and Wentzell (ref ~\cite{Freidlin:84}), here supposed to be still valid under slightly enlarged conditions.

A central notion in  ~\cite{Freidlin:84} is the {\em quasipotential}. For diffusion-like perturbations of dynamical systems, this quantity appears  in  ~\cite{Freidlin:84} as the energy function mentioned  above, under suitable hypothesis, among which uniqueness of the equilibrium point.   It is not clear that this holds for Markov jump processes as those  considered here. Nevertheless, the quasipotential is involved in estimations of exit times that  give evidence  of metastability. 

  Here, it will be moreover shown that the Lyapunov function exhibited in  ~\cite{Antunes:08}  is actually equal to the quasipotential of a slight variant of the model of   ~\cite{Antunes:08} associated with the same  dynamical system.\\

The following very simple observation opens the way to  using  the Large Deviation results of Freidlin and Wentzell. The two models in ~\cite{Antunes:08} and ~\cite{Gibbens:90}, recalled in  ~\ref{subsec:GHK} and ~\ref{subsec:AFRT}, have   a similar structure: For each $N$, they are described by some irreducible Markov processes  $(Y^N(t), t \ge 0)$ on the finite subset $\mathcal Y ^N=\{ y \in\Y: Ny_n \in \N \text{ for all } n\in \X  \}$ of $\Y =\mathcal P(\X)$, with transitions $\ y \longrightarrow  y+\frac{z}{N} $,  where $z$ lives in some finite set $Z \subset \{(z_n) _{n \in \X} \in \Z^{\X}: \sum_n z_n=0\}$ which is independent of $y$ and $N$. Moreover the rate of jump from $y$ to  $ y+\frac{z}{N} $ is given by
\begin{align} \label{eq:rates} Q^N(y,  y+\frac{z}{N})= N \Big( \mu_y(z) +O(1/N) \Big),\end{align} 
  where $(\mu _y)_{y \in\mathcal Y}$  is a family of positive measures on $Z$ with index $y$ in $\Y= \mathcal P (\X)$ such that
 \begin{itemize}
\item $ \mu_y(z)$  is continuous with respect to $y\in \Y$  for fixed $z$,
\item $ \mu_y(z)=0$  whenever $z_n <0$ for some $n\in \X$ such that $y_n=0$, that is, if $y \in \partial \mathcal Y \equiv  \left \{ y\in \mathcal Y: \exists n, y_n=0\right \}$ and if the jump $(y, y+z)$ is directed toward the outside of $\mathcal Y$.
\end{itemize}

In the above expression of jump rates, $O(1/N)$ denotes functions of $N$, $y$ and $z$, whose product by $N$ is uniformly bounded in $y\in \mathcal Y$.

 Jump rates  ~\eqref{eq:rates} tell that  in the neighborhood of some $y$, the process is approximately described by the random walk with jump rates  $\mu _y(z)$, $z \in Z$, rescaled by accelerating time and shrinking space  by the same factor $N$.
\\

For $y \in \mathcal Y$, denote $m_y$ the mean of measure  $\mu _y$ ($\mu _y$ is not a probability measure in general):
\[ m_y = \sum _{z\in Z} z \mu _y(z) \quad (m_y \in  \{(z_n) _{n \in \X}\in \R^{\X}: \sum _{n \in \X} z_n=0\}) .\]
Then for $T<+\infty$, $(Y^N(t))_{0 \le t \le T}$ converges in distribution  as $N$ tends to infinity to the dynamical system:
\begin{align} \label{eq:systdyn} \dot y (t) = m_{y(t)}.
\end{align}

In both examples of interest, $m_y=yL_y$ for some family $(L_y)$ of reversible generators. This will be essential for both interpretations of the Lyapunov function of ~\cite{Antunes:08}, respectively given in Theorem ~\ref{thm:modif} and Proposition ~\ref{prop:lyap}.
 
As  mentioned in Section ~\ref{sec:mean_field}, the models in ~\cite{Gibbens:90}  and ~\cite{Antunes:08} exhibit  three  equilibrium points  (among which two  are stable)  for suitable values of the parameters.\\

If terms $O(1/N)$  in the jump rates  ~\eqref{eq:rates} are omitted  and $\Y$ is replaced by $\R ^d$, the resulting processes belong to a class  introduced by Freidlin and  Wentzell in~\cite{Freidlin:84}, Chapter 5, as \emph{locally infinitely divisible processes}. 
Their context is more general, since the infinitesimal generators  they consider include both jump and diffusion terms, writing  for $C^2$ functions $f$,
\[\Omega^N(f)= \langle m_y,\nabla f(y)\rangle + N \int _{z \in \R^d}  \left [ f(y+ \frac{z}{N}) -f(y) -\frac{1}{N}  \langle z,\nabla f(y)\rangle \right ] d\mu _y(z) +\frac{1}{2N} \sum _{i,j} a_{ij}\frac{\partial ^2 f(y)}{ \partial y_i \partial y_j},\]
where, here and after, $\langle \, , \, \rangle$ denotes the usual scalar product in $\R ^d$. 

The main result in Chapter 5 of ~\cite{Freidlin:84} is that these processes satisfy a Large Deviation Principle on any finite  time interval  $[0,T]$, with scaling coefficient $N$ and   action functional  given by
\[S_{0T}(\varphi)= \left \{ \begin{array}{ll} \int _0 ^T L(\varphi _t, \dot {\varphi} _t) \, dt  &   \text{ for absolutely continuous } \varphi \text{ s.t.\  the integral is well defined},\\ + \infty  & \text{ otherwise}   \end{array} \right. \]
where $L(y,\cdot)$ is the Legendre  transform of some $H(y,\cdot)$  given in our discrete  pure jump case by:
\begin{align} \label{eq:H} H(y,\alpha)= \sum _{z \in Z} \mu _y(z) \Big( e^{\langle \alpha, z \rangle} -1 \Big) \quad (\alpha \in \R^d) .\end{align}
Recall that this  Legendre transform is defined, for $y \in \Y$ and $\beta \in \R ^d$, by
\[ L( y, \beta)= \sup _{\alpha \in \R^d} [\langle \alpha, \beta \rangle -   H(y,\alpha)]. \]

 Large Deviation estimates are then deduced for derived quantities as  the \emph {exit point},  \emph {exit path} and  \emph {exit time} from some domain included in the attraction basin of a stable equilibrium point $y_0$. The estimations for the exit point  and exit time both involve  the \emph {quasipotential} $V$ relative to $y_0$,    which is defined as: 
\[ V(y_0,y)  =\inf \{ S_{0T}(\varphi): T>0,  \varphi \text{ absolutely continuous }, \varphi _0=y_0, \varphi _T=y \}.\]
The exit point from some attracted domain $D$ is shown to be concentrated around the point $y$, if unique,   minimizing  $ V(y_0,y) $ on the boundary $\partial D$ of $D$, while the logarithm of the exit time is close to $N$ times the minimum value of $ V(y_0,y) $ on $\partial D$.   

  A Large Deviation Principle is stated for the invariant distribution,  with  action functional  precisely  given by  $ V(y_0,y) $, but only in the pure diffusion case  and when the equilibrium point $y_0$  is unique.  This cannot thus be applied to the multistable models of interest to us.\\

The main large deviation result for sample paths (hence  all its subsequent  results) relies on  a set of technical hypothesis about the function  $H$ and its Legendre transform $L$. They can be summarized as follows:

I. finiteness of $\sup _ y H(y, \alpha)$;

II. finiteness of $L(y,\beta)$; local in $\beta$, uniform in $y$ boundedness of $L(y,\beta)$ and $\nabla _{\beta}L(y,\beta)$;   strict, uniform in $y$  convexity of functions $L(y,.)$;

III. some specific  equicontinuity property of functions  $L(\cdot,\beta)$.

The last condition is essentially used for replacing arbitrary sample paths by polygons. The mere proof of lower semicontinuity of $S_{0T}$ makes use of  it.
\\

The  models in ~\cite{Antunes:08} and ~\cite{Gibbens:90} differ from the pure-jump processes of ~\cite{Freidlin:84} both through the second order  terms $O(1/N)$ appearing in the jump rates, and through the compact state space $\Y$ standing in place of $\R^d$.

 Dropping one component of $y=(y_n)_{ n \in \X}$  ($\sum _n y_n=1$),  $\Y$ can be identified with a compact subset of $\R^{d}$ with $d= |\X|-1$.  Transitions can then be extended  outside $\Y$ to the entire space $\R^d$. But II and III  then fail to be true, due to the fact that for $y$ on the frontier of $\Y$,  $L(y, \beta)= + \infty$ for  all $\beta$'s outside some cone, depending on $y$ (and not only consisting of those $\beta$'s pointing out of $\Y$).  ~\cite{Wentzell:76} slightly relax conditions I and II, but this is still not sufficient for our purpose. New arguments  need thus  be found  to free from these conditions, in order to deal with sample paths hitting or starting on the boundary of $\Y$. Note that  condition I is satisfied, due to compactness of $\Y$ and continuity of  each $\mu _y(z)$ with respect to $y$.

As for the second order terms  $O(1/N)$, here again the proof of the Large Deviation principle for sample paths (with action functional $S_{0T}$) needs to be adapted. The same change of measure technique might be used, but with $H^N$ and its Legendre transform $L^N$ instead of $H$ and $L$, where $H^N$ is defined analogously to $H$ in  ~\eqref{eq:H},  replacing  $\mu _y(z)$  by  $\mu _y(z) +O(1/N)=1/N Q_N(y,y+z/N)$. The terms $O(1/N)$ should finally  not infer on the  estimations.

All this constitutes a challenging problem which is not addressed in this paper. For the present discussion, it is admitted that all the above mentioned results from Chapter 5 of ~\cite{Freidlin:84}  (that is, the Large Deviation Principle for sample paths, from which all other results derive)  still hold for the two models of interest in this paper.
\\

 In all what follows, the state space $\Y$ is identified with the compact subset of $\R ^d$, where $d=| \X | -1$: $\{ y \in [0, + \infty[^d :  \sum _{i=1}^d y_i \le 1 \}$,  dropping  for example  coordinate $y_0$ of $y = (y_n)_{ n \in \X} \in \Y$. Similarly, the set $Z$ of possible jumps is  identified with a finite subset of $\R^d$. $H(y, \alpha)$ is then  defined    by ~\eqref{eq:H}  for $y \in \Y$ and $\alpha \in \R^d$. This embedding of  $\Y$  in the correct dimension space $\R^d$ in which  the process lives ensures at least that the conditions I to III of ~\cite{Freidlin:84} are satisfied in the interior of $\Y$. 

Let us just mention that the lower semicontinuity of $S_{0T}$ can be proved in our context by describing  $S_{0T}$ as the supremum of functionals  $S^{\varepsilon}_{0T}$ that satisfy the conditions in  ~\cite{Freidlin:84}. Indeed, extend the $\mu_y(z)$ to all $y \in \R^{d}$ so that, for all $z \in Z$, $y \mapsto \mu_y(z)$ is continuous with compact support. Then  introduce the following perturbations of the resulting  $H$: For $\varepsilon >0$,
\[ H^{\varepsilon}(y, \alpha)= H(y,\alpha) + \varepsilon \sum _{z \in Z'}  \Big( e^{\langle \alpha, z \rangle} +  e^{-\langle \alpha, z \rangle} -2 \Big), \quad  (y, \alpha \in   \R^{d}),\]
where $Z'$ is some finite subset of $\R^{d}$   whose generated convex cone is  the whole space. It can then be proved that  the $H ^{\varepsilon}$ and their Legendre transforms  $L ^{\varepsilon}$  satisfy conditions  I to III (following the lines  in the last section of ~\cite{Wentzell:76} for III). According to ~\cite{Freidlin:84}  (theorem 2.1 of Chapter 5) or  ~\cite{Wentzell:76}, the associated $S ^{\varepsilon} _{0T}$ are then lower semicontinuous on the set of continuous paths in $\R ^d$ endowed with the uniform norm topology.

Now  $H ^{\varepsilon}$ clearly decreases to $H$ as $\varepsilon$ decreases to $0$, so that  $L ^{\varepsilon}$ increases and 
\[  L(y, \beta) = \sup _{\alpha} [\langle \alpha, \beta \rangle -   H(y,\alpha)]=   \sup _{\alpha} \sup _{\varepsilon} [\langle \alpha, \beta \rangle -   H ^{\varepsilon} (y,\alpha)] = \sup _{\varepsilon}  L ^{\varepsilon} (y,\beta). \]

It follows by monotone convergence that  $ S _{0T} = \sup _{\varepsilon}S ^{\varepsilon} _{0T}$,  from which the lower semicontinuity of $S_{0T}$ follows.

Notice that   semicontinuity of  $ S _{0T} $ together with condition I  imply (see ~\cite{Wentzell:76}) that the level sets of  $ S _{0T} $  (that is, the sets $\{ \varphi : S_{0T}(\varphi) \le s \}$ for  $s \in [0, + \infty[$) are compact sets in the topology of uniform convergence. This is technically important for proving  the Large Deviation Principle along the classical scheme.
\\

Metastability refers to the large deviation result for exit times from  neighborhoods of  stable equilibrium points.
Theorem 4.3 of Chapter 4 of ~\cite{Freidlin:84} gives the main estimate of such exit times for  Gaussian perturbations of  dynamical systems.    It is then indicated in Chapter 5 how to adapt the proof to a jump-like perturbation of the above described form. In both situations, the result  is derived from the Large Deviation Principle for sample paths.

Recall that an \emph {asymptotically stable} equilibrium point is an equilibrium point $y_0$ such that for any neighborhood $\mathcal N$ of $y_0$, there exists some smaller neighborhood $\mathcal N'$ such that any trajectory initiated in $\mathcal N'$ converges to $y_0$ without leaving $\mathcal N$. 

Also, a domain $D$ is said to be \emph{attracted} to some equilibrium point $y_0$ if any trajectory initiated in $D$  converges to $y_0$ without  leaving $D$.

For the model in ~\cite{Antunes:08}, the Lyapunov function $g$ ensures that all stable equilibrium points (that is, local minima of $g$)  are asymptotically stable.

  From the above discussion, one can deduce from  Chapter 5 of ~\cite{Freidlin:84} the following:

\begin{corollary}
 Assume that the Large Deviation Principle for sample paths, with action functional $S_{0T}$, is valid for the model  in   ~\cite{Antunes:08}. Let $y_0$ be any stable equilibrium point for the limiting dynamics, and let $g$ be the Lyapunov function given by ~\eqref{eq:g}. For any  positive $\delta$, define  $B_{\delta}$ as the connected component of $y_0$ in  the set  $\{y \in \Y: g(y) <g(y_0) + \delta \}$. Define
  \[\tau ^N_{\delta}  = \inf \{t>0: Y^N(t) \notin B_{\delta} \}.\] 
 If  $\delta$ is small enough, then for any $\alpha >0$ and $y \in B_{\delta}$,
\[\lim _{N \to \infty} \P _y(e^{N(V_0- \alpha)} <\tau^N_{\delta} < e^{N(V_0+ \alpha)}) =1\]
where $V_0= \min _{y' \in \partial B_{\delta}} V(y_0,y')$.

\end{corollary}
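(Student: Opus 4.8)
The statement is an instance of the exit-time estimate of Freidlin and Wentzell (Theorem 4.3 of Chapter 4 of \cite{Freidlin:84}, whose proof is adapted to jump perturbations in Chapter 5), and the plan is to derive it from the assumed sample-path Large Deviation Principle exactly along those lines. The only model-specific work is to check that $B_\delta$ meets the hypotheses of that estimate: that $B_\delta$ is a domain attracted to $y_0$ in the sense recalled above, with $y_0$ the sole equilibrium point in $\overline{B_\delta}$, that $\partial B_\delta$ is a regular boundary on which the limiting flow points strictly inward, and --- crucially for our setting --- that the whole exit analysis can be kept away from $\partial\Y$. I would extract all of this from the Lyapunov function $g$.

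I would first record that every equilibrium point equals an Erlang distribution $\nu_{\rho(y_0)}$, hence has all coordinates strictly positive, so $y_0$ lies in the interior of $\Y$. Since $y_0$ is a local minimum of $g$ and the critical points of $g_{|\Y}$ are isolated, $y_0$ is in fact a strict local minimum (a non-strict one would produce critical points arbitrarily close to $y_0$) and is the only critical point in some ball $\overline{B(y_0,r)}\subset\mathrm{int}\,\Y$. For $\delta$ smaller than $\min_{|w-y_0|=r}g(w)-g(y_0)>0$, the connected component $B_\delta$ of $y_0$ in $\{g<g(y_0)+\delta\}$ cannot reach the sphere $\{|w-y_0|=r\}$, so $\overline{B_\delta}\subset B(y_0,r)$ is a compact subset of the interior of $\Y$ containing no critical point but $y_0$. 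In particular $\nabla g\neq 0$ on $\partial B_\delta\subset\{g=g(y_0)+\delta\}$, which is therefore a regular level hypersurface, and --- as noted before the statement --- conditions I to III of \cite{Freidlin:84} are satisfied throughout this interior region.

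Next I would use the strict Lyapunov property to obtain attraction. Along any solution $y(t)$ of \eqref{eq:S}, $\frac{d}{dt}g(y(t))=\langle y(t)L_{y(t)},\nabla g(y(t))\rangle\le 0$ with equality only at equilibria; hence $g$ is nonincreasing along trajectories, a trajectory started in $B_\delta$ stays in $\{g<g(y_0)+\delta\}$ and, by connectedness, in $B_\delta$ itself, and by LaSalle's invariance principle it converges to the largest invariant set contained in $\{\langle yL_y,\nabla g(y)\rangle=0\}\cap\overline{B_\delta}$, which is $\{y_0\}$. Thus $B_\delta$ is attracted to $y_0$, and since $g$ strictly decreases along the flow on $\partial B_\delta$, the flow points strictly inward there. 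Consequently $V_0=\min_{y'\in\partial B_\delta}V(y_0,y')$ is attained (compactness of $\partial B_\delta$), finite (there exist finite-action paths from $y_0$ to the boundary), and strictly positive (reaching $\partial B_\delta$ runs against the flow, so it cannot be done at zero cost).

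With the hypotheses in place, the two-sided bound follows from the assumed LDP by the standard cycling argument: the upper bound on $\tau^N_\delta$ uses the LDP lower bound on a near-optimal exit path of action at most $V_0+\alpha/2$, repeated over excursions that, by attraction, return to a neighborhood of $y_0$ in bounded time, so that an exit is realized before $e^{N(V_0+\alpha)}$; the lower bound uses the LDP upper bound to make the probability of exiting within any fixed time window at most $e^{-N(V_0-\alpha/2)}$, whence a union bound over the $\sim e^{N(V_0-\alpha)}$ windows keeps $\P_y(\tau^N_\delta<e^{N(V_0-\alpha)})$ negligible. I expect the real difficulty to lie not in this bookkeeping but upstream, in justifying that the Chapter 5 jump-process adaptation applies verbatim once $\overline{B_\delta}$ is confined to $\mathrm{int}\,\Y$: one must ensure in particular that the paths competing in $V(y_0,\cdot)$ near $\partial B_\delta$, and the optimal exit paths, stay in the interior region where conditions I to III hold rather than straying toward $\partial\Y$. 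For $\delta$ small this is very plausible because $\partial B_\delta$ hugs the interior point $y_0$, but turning it into a proof is precisely the boundary question the paper defers under its blanket assumption.
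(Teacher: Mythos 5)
Your proposal is correct and follows essentially the same route as the paper: both reduce the statement to the Freidlin--Wentzell exit-time theorem (Theorem 4.3 of Chapter 4, in its Chapter 5 jump-process form) by using the Lyapunov function $g$ to show that, for small $\delta$, the level-set component $B_\delta$ is attracted to the strict local minimum $y_0$, has a smooth level-surface boundary free of critical points, and that the flow points strictly inward on $\partial B_\delta$. Your substitutions --- LaSalle's invariance principle in place of the paper's citation of Perko for asymptotic stability, the explicit remark that equilibria are interior Erlang points, and the sketch of the cycling argument behind the cited theorem --- are harmless elaborations of the same argument.
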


 \begin{proof}

In order to apply Theorem 4.3 of Chapter 4 of ~\cite{Freidlin:84}, in its modified version discussed in Chapter 5, it must be proved  that $y_0$ is an asymptotically stable equilibrium point for the dynamical system $\dot y =m_y$, that for  small enough  $\delta$,  the  domain  $ B_{\delta}$ is attracted to $y_0$ and has smooth boundary $\partial  B_{\delta}$, and that $\langle n(y),m_y \rangle <0$ for $y \in \partial  B_{\delta}$, where $n(y)$ is the exterior normal at $y$.

As indicated  in Section ~\ref{sec:mean_field}, the critical points of $g$ are isolated, so that the stable equilibria, that is, the  local minima of $g$,  are necessarily {\em strict} local minima. It is then classical (see for example ~\cite{Perko:91}) that, due to the Lyapunov property of  $g$, $y_0$ is asymptotically stable. 

Now choose some open ball $B(y_0,r)$ centered at $y_0$, with radius $r>0$ small enough so  that  $g(y)>g(y_0)$ for all $y \in ( B(y_0,r) \cup \partial B(y_0,r)) \setminus \{y_0 \}$ ($y_0$ is a strict minimum of $g$) and that any trajectory initiated in  $B(y_0,r)$ converges to $y_0$ ($y_0$ is asymptotically stable). Denote $\delta _0= \inf \{g(y)-g(y_0): y \in \partial B(y_0,r) \}$.  Compactness of $\partial B(y_0,r)$ implies that $\delta  _0>0$.

For any $\delta >0$, by the decreasing property of $g$, the set $ \{y \in \Y: g(y) <g(y_0) + \delta \}$ is  invariant under the flow. The same is then true for the connected component $B_{\delta}$, by continuity of trajectories.  In addition, for $\delta \le \delta _0$,  by connectedness  of $B_{\delta}$, $B_{\delta } \subset B(y_0,r)$ (if this were not the case, $B_{\delta}$ would meet $\partial B(y_0,r)$, contradicting  $g(y)-g(y_0) < \delta$ on $B_{\delta}$).
 It  then results that $B_{\delta}$ is attracted to $y_0$. 

It now only remains to prove, still assuming $\delta \le \delta _0$,   that $\partial B_{\delta}$ is smooth and  that  $\langle n(y),m_y \rangle <0$ for $y \in \partial B_{\delta}$, where $n(y)$ the exterior normal of $\partial B_{\delta}$  at $y$. Both properties derive from the fact that  $\partial B_{\delta}$  is  a level surface for $g$. In particular, $ \nabla g(y) = \alpha _y  n(y)$ for some non-negative $\alpha _y$, which is not zero since $y$ is not an equibrium point (take $\delta < \delta _0$ here). Then  $\langle n(y),m_y \rangle= \langle \alpha_y ^{-1} \nabla g(y),m_y \rangle <0$ from the Lyapunov property of $g$. 

The proof is achieved by applying the above  cited theorem of  ~\cite{Freidlin:84} in its version corresponding to the jump-process case of Chapter 5.   
 
 \end{proof}
 
 \begin{remark}
  For the model in ~\cite{Gibbens:90}, the same kind of estimation holds for  exit times from neighborhoods of asymptotically stable  points. Note however that little is known for  this model: Multistability is  obtained  numerically, and  asymptotical stability of  stable points remains to be proved. 
  
  \end{remark}
  
  A companion  corollary can  be stated, telling that the exit point from a neighborhood  $\mathcal N$of $y_0$ is, with high probability as $N \to \infty$, close to the point that minimizes $V(y_0,y)$ on $\partial \mathcal N$, when this one  is unique.  However, this last condition is not guaranteed for $B_{\delta}$. As an example, for  the  modified  model introduced below,  $V(y_0,.)$ and $g$ coincide up to a constant on some neighborhood of $y_0$ (Theorem ~\ref{thm:modif}), so that $B_{\delta}$ is not  a good choice in this case,  $V(y_0,.)$ being \mbox {constant on $\partial B_{\delta}$. }
\\

It will now be proved that $g$ is equal to the quasipotential of a process  obtained from the model in  ~\cite{Antunes:08} by  modifying  the jumps in Section ~\ref{subsec:AFRT} as follows:  Moves  of customers from one node to another are replaced by  departures and arrivals now occuring independently. In other words,  jumps  of the form $ y \longrightarrow  y+\frac{1}{N} (e_{m+f_k}-e_m +e_{n-f_k}-e_n)$ are now split into jumps $ y \longrightarrow  y+\frac{1}{N}  (e_{m+f_k}-e_m)$ and   $ y \longrightarrow   y+\frac{1}{N}  (e_{n-f_k}-e_n)$,  each keeping the original rate. This modifies the Markovian dynamics, but not the limiting dynamical system.

The new transitions and rates are  easily checked: For $1 \leq k\leq K$, $n \in \X$,
\[ y \longrightarrow \left \{ \begin{array}{l}  y+\frac{1}{N} (e_{n+f_k}-e_n) \\ \\
 y+\frac{1}{N} (e_{n-f_k}-e_n)
 \end{array} \right.
\text{at rates}  \begin{array}{l}
  N \Big( (\lambda _k + \gamma _k [I_k,y])y_n {\mathds 1}_{n+f_k \in {\mathcal X}} + O(1/N)  \Big)  \\
\\  N \Big( (\mu _k + \gamma _k) n_k y_n + O(1/N)  \Big)
\end{array} \]

Formally,  the initial Markov process   on $\Y^N$  has jump rates as  in  ~\eqref{eq:rates} with 
\newline $\displaystyle{m_y= \sum _{z \in Z} z \mu _y(z) } =yL_y$ ($y \in \Y$) where  $L_y$  is some infinitesimal generator (namely, that of an $M/M/C/C$ queue with $K$ classes of customers and for  $k=1, \dots ,K$, arrival rates $ \lambda _k+\gamma _k [ I_k,y]$, service rates  $\mu _k +\gamma _k$ and capacity requirements $A_k$). 
So
\[ m_y= \sum _{(m,n) \in \X^2} (y_m L_y(m,n) -y_n L_y(n,m))e_n = \sum _{(m,n) \in \X^2} y_m L_y(m,n) (e_n -e_m). \]
For the modified process just described, transitions are reduced to ``elementary'' jumps of the form  $y \longrightarrow  y+\frac{1}{N} (e_n -e_m)$, with  the associated rates  $N  (y_m L_y(m,n) + O(1/N))$. In other words, for the new process,  \eqref{eq:rates} is still satisfied with $\mu _y(z)$ now defined for  $y \in \Y$ and $z=e_n-e_m$, $n,m \in \X$, by
 \begin{align}  \label{eq:modifrates} \mu _y(e_n-e_m)=  y_m L_y(m,n). \end{align}
 This gives the same limiting vector field $(m_y)_{y\in \Y}$ as for the original process.

\noindent (Note that $L_y(m,n)$ is non zero for $m\neq n$ only if  $n=m \pm f_k$ for some $k=1, \dots ,K$.)

\begin{thm} \label{thm:modif} (i) Let $H$ be the functional defined by ~\eqref{eq:H} for the above modified process. The Lyapunov function $g$ given by ~\eqref{eq:g} satisfies
\begin{align} \label{eq:nabla} H(y, \nabla g(y))=0 \quad (y \in \stackrel{\circ}{\Y)},\end{align}
and 
\[\nabla g(y)=0 \Longleftrightarrow m_y=0.\]

(ii) Let $y_0$ be a stable equilibrium of the dynamical system $\dot y= yL_y$ associated with the model of ~\cite{Antunes:08} or equivalently with its modified version.

Denote   $V(y_0,y)$ the quasipotential  of the modified process,  relative to $y_0$.

The following equality holds on  some neighborhood of $y_0$:
   \[V(y_0,y)=g(y) -g(y_0). \]

\end{thm}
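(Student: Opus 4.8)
My plan is to establish part (i) by a direct computation with the explicit formulas for $g$ and for $H$, exploiting crucially the reversibility of the generators $L_y$, and then to deduce part (ii) from the Freidlin--Wentzell characterization of the quasipotential together with a Hamilton--Jacobi argument.

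\emph{Part (i).} For the modified process, by~\eqref{eq:modifrates} the measure $\mu_y$ is supported on jumps $z=e_n-e_m$ with weight $y_m L_y(m,n)$, so that from~\eqref{eq:H}
\[
H(y,\alpha)=\sum_{(m,n)\in\X^2} y_m L_y(m,n)\Big(e^{\langle\alpha,\,e_n-e_m\rangle}-1\Big).
\]
First I would compute $\nabla g$ from~\eqref{eq:g}. Differentiating $\sum_n y_n\log(n!\,y_n)$ gives a term $\log(n!\,y_n)+1$, while the integral term contributes $-\log\frac{\lambda_k+\gamma_k[I_k,y]}{\mu_k+\gamma_k}$ for each $k$ via the chain rule through $[I_k,y]=\sum_n n_k y_n$. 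Recalling that $L_y$ is the generator of an $M/M/C/C$ queue with $K$ classes and arrival-to-service ratios $\rho_k(y)=\frac{\lambda_k+\gamma_k[I_k,y]}{\mu_k+\gamma_k}$, whose reversible invariant measure is the Erlang distribution $\nu_{\rho(y)}(n)=\frac{1}{Z}\frac{\rho(y)^n}{n!}$, I expect the partial derivative $\partial g/\partial y_n$ to reduce (up to the coordinate dropped in identifying $\Y\subset\R^d$) to $\log\frac{y_n}{\nu_{\rho(y)}(n)}$ plus a constant. The key algebraic fact I would then use is detailed balance: $\nu_{\rho(y)}(m)L_y(m,n)=\nu_{\rho(y)}(n)L_y(n,m)$. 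Substituting $\langle\nabla g(y),\,e_n-e_m\rangle = \log\frac{y_n/\nu(n)}{y_m/\nu(m)}$ into $H$ and using reversibility to pair the $(m,n)$ and $(n,m)$ terms should make the exponential collapse, yielding $H(y,\nabla g(y))=\sum_{m,n} y_m L_y(m,n)\big(\frac{y_n\nu(m)}{y_m\nu(n)}-1\big)$, whose sum over all ordered pairs telescopes to $\sum_n\big(\sum_m y_m L_y(m,n)\big)-\sum_m y_m\big(\sum_n L_y(m,n)\big)=0$, since each row of $L_y$ sums to zero and the first double sum is $\sum_n (yL_y)_n=0$. This establishes~\eqref{eq:nabla}. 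The equivalence $\nabla g(y)=0\iff m_y=0$ follows from the characterization $yL_y=0\iff\nabla g(y)\perp\Y$ stated in Section~\ref{sec:mean_field}, once one checks that on $\Y$ the condition $\nabla g(y)\perp\Y$ is equivalent to $\nabla g(y)=0$ in the $\R^d$ embedding where the dropped coordinate is accounted for.

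\emph{Part (ii).} The quasipotential $V(y_0,\cdot)$ relative to a stable equilibrium $y_0$ is, by the Freidlin--Wentzell theory, the maximal nonnegative solution of the Hamilton--Jacobi equation $H(y,\nabla V(y,\cdot))=0$ vanishing at $y_0$, and it is the action of the minimizing time-reversed fluctuation path. I would argue that $g(y)-g(y_0)$ is a candidate solution: by part~(i) it solves $H(y,\nabla(g-g(y_0)))=0$, it vanishes at $y_0$, and since $y_0$ is a strict local minimum of $g$ (the critical points being isolated by the analytic argument in Section~\ref{sec:mean_field}) it is nonnegative and has a strict minimum at $y_0$ on a neighborhood. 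To conclude $V(y_0,y)=g(y)-g(y_0)$ locally, I would show that $g-g(y_0)$ is exactly the quasipotential by the standard verification argument: along any path $\varphi$ from $y_0$ one has the pointwise bound $\dot\varphi_t\cdot\nabla g(\varphi_t)\le L(\varphi_t,\dot\varphi_t)+H(\varphi_t,\nabla g(\varphi_t))=L(\varphi_t,\dot\varphi_t)$ from the Legendre-transform (Fenchel) inequality, which upon integration gives $S_{0T}(\varphi)\ge g(\varphi_T)-g(y_0)$, hence $V(y_0,y)\ge g(y)-g(y_0)$; the reverse inequality comes from exhibiting the reversed-flow path that achieves equality, namely the solution of $\dot\varphi=\nabla_\alpha H(\varphi,\nabla g(\varphi))$, which is the time-reversal of the original dynamics (this is precisely where the \emph{asymptotic reversibility} heuristic mentioned in the introduction enters) and along which the Fenchel inequality is saturated.

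\emph{Main obstacle.} The delicate point is justifying the reverse inequality $V(y_0,y)\le g(y)-g(y_0)$ on a genuine neighborhood: one must verify that the Hamiltonian trajectory $\dot\varphi=\nabla_\alpha H(\varphi,\nabla g(\varphi))$ is well defined, saturates the Legendre inequality, and actually connects $y_0$ to $y$ in finite-action (possibly infinite-time) fashion while remaining in the region where $\nabla g$ behaves well. Since the characteristic flow of the reversed system converges to $y_0$ only asymptotically, the connecting path has infinite time horizon, and one must check that the action integral still converges to $g(y)-g(y_0)$ --- this requires the strict local minimum structure of $g$ at $y_0$ and a control of the rate of convergence near $y_0$, which is why the equality is asserted only on \emph{some} neighborhood of $y_0$ rather than globally.
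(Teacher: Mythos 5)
Your proposal is correct and, at its core, coincides with the paper's argument: both parts hinge on the explicit formula $\langle\nabla g(y),e_n\rangle=\log\bigl(y_n/\nu_{\rho(y)}(n)\bigr)+1-\log Z(\rho(y))$ and on detailed balance $\nu_{\rho(y)}(m)L_y(m,n)=\nu_{\rho(y)}(n)L_y(n,m)$. For (i) the two computations are the same up to bookkeeping (the paper symmetrizes the sum so that each paired term vanishes, while you substitute and invoke zero row sums); note that your intermediate identity $y_mL_y(m,n)\,e^{\langle\nabla g(y),e_n-e_m\rangle}=y_nL_y(n,m)$ is exactly the local balance relation \eqref{eq:rev} of Remark~\ref{remark:balance}, which is stronger than \eqref{eq:nabla} and is precisely what you need later in (ii). For (ii) you run the classical Hamilton--Jacobi verification: Fenchel's inequality together with \eqref{eq:nabla} gives $S_{0T}(\varphi)\ge g(\varphi_T)-g(\varphi_0)$, hence the lower bound on $V(y_0,\cdot)$, and saturation along the characteristic flow $\dot\varphi=\nabla_\alpha H(\varphi,\nabla g(\varphi))$ gives the upper bound. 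The paper packages exactly this content differently: it reverses time along paths and introduces the reversed jump measures $\tilde\mu_y(z)=e^{-\langle\nabla g(y),z\rangle}\mu_y(-z)$, whose Lagrangian $\tilde L\ge 0$ yields the identity $S_{0T}(\varphi)=\int_0^T\tilde L(\psi_t,\dot\psi_t)\,dt+g(y)-g(y_0)$ --- nonnegativity of $\tilde L$ is your Fenchel inequality, and $\int\tilde L$ is the saturation defect --- and then shows via Remark~\ref{remark:nabla} that $g$ is a Lyapunov function for $\dot y=\tilde m_y$, so the $\tilde m$-trajectory from $y$ relaxes to $y_0$ at zero $\tilde L$-cost. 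Since \eqref{eq:rev} forces $\tilde\mu_y=\mu_y$, the paper's ``reversed'' flow is in fact the original relaxation dynamics, and your characteristic path $\dot\varphi=-m_\varphi$ is its time reversal: the two constructions single out the same optimal trajectory. Your flagged obstacle (the infinite time horizon, plus joining $y_0$ to the starting point of the truncated path) is genuine and is the same one the paper disposes of, rather tersely, by invoking boundedness of $\tilde L$ on compact subsets of $\stackrel{\circ}{\Y}\times\R^{\X}$ to append a cheap connector; no control on the rate of convergence near $y_0$ is actually needed, only continuity of $L$ near $(y_0,0)$ and $L(y_0,0)=0$.
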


\begin{remark}  \label {remark:nabla}
Before proving the theorem, let us make a remark on equation ~\eqref{eq:nabla}.
Assume that $g$ solves ~\eqref{eq:nabla}  and satisfies
\begin{align} \label{eq:strict} \nabla g(y)=0 \Longrightarrow m_y=0,\end{align}
 then $g$ is a Lyapunov  function for the dynamical system  $\dot y=m_y$.

 Indeed due to  strict convexity of $H(y,\cdot )$,  the  solutions $\alpha$ of $H(y,\alpha)\le 0$ are a  strictly  convex subset $C$ of   $\R^{d}$. Since the boundary $\partial C$  contains $0$, and $m_y = \nabla _{\alpha} H(y, 0) $ is the exterior normal  at  $0$, then  $\langle m_y, \alpha\rangle \le 0$ for any $\alpha $ in $C$, and equality holds only for $\alpha =0$.   In particular, equation  ~\eqref{eq:nabla} implies that $\langle m_y,\nabla g(y)\rangle \le 0$, and  that  $\langle m_y,\nabla g(y)\rangle = 0$ holds only if   $\nabla g(y)=0$, hence only if $m_y=0$ under assumption ~\eqref{eq:strict}.

 \end{remark}

\begin{proof}
(i) $H$ writes:
\[ H(y, \alpha)= \sum _{(m,n) \in \X^2} y_m L_y(m,n) \Big(e^{\langle \alpha, e_n -e_m \rangle} -1\Big). \]
(Note that in order to fit conditions I to III of ~\cite{Freidlin:84} on $\stackrel{\circ}{\Y}$,   $\alpha$  should vary  in $\R^{|\X|-1}$, $e_n-e_m$ being replaced by its projection on $\R^{|\X|-1}$, and scalar products being understood in  $\R^{|\X|-1}$. But for computing $ H(y, \nabla g(y))$,  it is easily checked that the result in unchanged if one keeps the original definition of $g$ as a function of $y \in \R^{|\X|}$ and uses  the scalar product in $\R^{|\X|}$).  

Denote $\nu _{\rho (y)}$, as in Section ~\ref{sec:mean_field},  the reversible distribution associated with generator $L_y$, and set $q_y(m,n)= \nu _{\rho (y)}(m) L_y(m,n)$ so that $q_y$ is symmetric in $(m,n)\in \X ^2$. Then
  \begin{multline*} H(y, \nabla g(y))= \sum _{(m,n) \in \X^2} y_m L_y(m,n) \, e^{-\langle \nabla g(y),  e_m \rangle} \Big(e^{\langle \nabla g(y), e_n  \rangle} - e^{\langle \nabla g(y), e_m  \rangle}\Big) \\
= \frac{1}{2} \sum _{(m,n) \in \X^2} \Big(y_m L_y(m,n) \, e^{-\langle \nabla g(y),  e_m \rangle} - y_n L_y(n,m) \, e^{-\langle \nabla g(y),  e_n \rangle} \Big) \Big(e^{\langle \nabla g(y), e_n  \rangle} - e^{\langle \nabla g(y), e_m  \rangle}\Big) \\
= \frac{1}{2} \sum _{(m,n) \in \X^2} q_y(m,n) \Big(\frac{y_m}{\nu _{\rho (y)}(m)}  \, e^{-\langle \nabla g(y),  e_m \rangle} - \frac{ y_n}{\nu _{\rho (y)}(n)}  \, e^{-\langle \nabla g(y),  e_n \rangle} \Big) \Big(e^{\langle \nabla g(y), e_n  \rangle} - e^{\langle \nabla g(y), e_m  \rangle}\Big) 
. \end{multline*}
The result follows from the fact that  ${\langle \nabla g(y), e_n  \rangle}=  \log \frac{ y_n}{\nu _{\rho (y)}(n)} +1- \log Z( \rho (y))$, as can easily be computed from  equation ~\eqref{eq:g}, so that
$ \frac{ y_n}{\nu _{\rho (y)}(n)}  \, e^{-\langle \nabla g(y),  e_n \rangle}$ is independent of $n\in \X$.

(ii) is then essentially  a consequence of a result by Freidlin and Wentzell, stated in ~\cite{Freidlin:84} as Theorem 4.3 of Chapter 5. Since  a ``local'' variant of this result is actually  needed, an independent proof is given, for completeness.
 Recall that 
  \[V(y_0,y)= \inf \left \{ S_{0T}(\varphi): T>0,   \varphi \text{ absolutely continuous}, \varphi _0=y_0, \varphi _T=y \right \} ,\]
  and consider any $T>0$ and any absolutely continuous $\varphi$  on $[0,T]$ such that $ \varphi _0=y_0, \varphi _T=y$. Reversing  time variable $t$ in the integral 
  $S_{0T}(\varphi)= \int _0 ^T L(\varphi _t, \dot {\varphi} _t) \, dt $ gives   $S_{0T}(\varphi)= \int _0 ^T L(\psi _t, -\dot {\psi} _t) \, dt $ where $\psi _t= \varphi _{T-t}$ for $0 \le t \le 0$.
  
  Introduce the following family of measures $(\tilde \mu _y)_{y \in \Y}$  on $-Z$, defined by
  \[ \tilde \mu _y (z) = e^{-  \langle \nabla g(y),z \rangle} \mu _y(-z) \quad (z \in- Z). \]
  (Considering $\mu _y$ as the jump length distribution  for  the  local random walk  approximating  $Y^N$ in the neighborhood of $y$, $\tilde \mu _y$ corresponds to the reversed random walk with respect to the measure  $ ( e^{-  \langle \nabla g(y),z \rangle})_{z \in Z}$, which is stationary  by ~\eqref{eq:nabla} and  ~\eqref{eq:H}.)
  
  Define $\tilde H$ and $\tilde L$ associated with $(\tilde \mu _y)$ in the same way as $H$ and $L$ with $(\mu _y )$. The following relations are easily checked: for any  $y \in \Y$ and $\alpha \in \R ^{\X}$,
  \[ \tilde H(y, \alpha)= H(y, \nabla g(y) - \alpha) \quad \text{ and }  \quad  \tilde L(y, \beta)= L(y, -\beta) + \langle \nabla g(y), \beta \rangle. \]
  One gets
  \[S_{0T}(\varphi)= \int _0 ^T \tilde L(\psi _t, \dot {\psi} _t) \, dt -\int   _0 ^T \langle \nabla g(\psi_t), \dot {\psi} _t \rangle \, dt =\int _0 ^T \tilde L(\psi _t, \dot {\psi} _t) \, dt + g(y) -g(y_0) \]
    since $\psi _0= y$ and $\psi _T=y_0$. 
    
 All that is left to prove now is that  for $y$ close enough to $y_0$,
   \begin{align}  \label{eq:inf} \inf  \left \{ \int _0 ^T \tilde L(\psi _t, \dot {\psi} _t) \, dt: T>0,   \psi  \text{ abs.~continuous}, \psi _0=y, \psi _T=y_0  \right \} =0 .
    \end{align}
   Using  Remark~\ref{remark:nabla}, it can be shown that   $g$ is a Lyapunov function for the ``locally reversed" dynamical system
   \[\dot y= \tilde m_y \quad \text{ where } \tilde m_y= \sum _{z\in -Z} z \tilde \mu _y(z). \]
   Indeed $\tilde H(y ,\nabla g(y) )= H(y, 0)=0$ for all $y$
   and ~\eqref{eq:strict} is   satisfied with $\tilde m_y$ in place of $m_y$ since $\nabla g(y)=0 $  both implies that $m_y=0$   by (i) and  that $\tilde m_y=- m_y$ from the definition of $\tilde \mu _y$.
   
   One can deduce from this that $y_0$ is also a stable equilibrium point for the dynamical system $\dot y= \tilde m_y$. As a consequence, there exists a neighborhood $\mathcal N$ of $y_0$ such that  all trajectories initiated in $\mathcal N$ converge to $y_0$ at infinity.  Assume from now on that $y \in \mathcal N$ and consider the trajectory $\psi$ initiated at $y $, then $\lim _{t \to + \infty} \psi _t = y_0$ and  $ \int _0 ^{+ \infty} \tilde L(\psi _t, \dot {\psi} _t) \, dt  =0$.
   
   It is then easy to derive  that the infimum in ~\eqref{eq:inf} is non-positive, using  boundedness of $\tilde L(y, \beta)$ on compact subsets of $\stackrel {\circ}{\Y} \times \R ^{\X}$. The result follows since this infimum is clearly non-negative (as  $\tilde L \ge 0$).
   
\end{proof}

\begin{remark} \label{remark:balance}
We conclude this section with two heuristic remarks.

  First, ~\eqref{eq:nabla} can be seen as  the limiting balance equation satisfied by  $\pi ^N$ if the approximation
\begin{align} \label{eq:pi} \pi ^N(y) \approx \frac {e^{-N g(y)}}{Z_N} \quad \text{ as } N \to \infty \end{align}
is valid with enough accuracy for some differentiable $g$.
Indeed, \mbox{the balance equations}
\[   \forall y \in \mathcal Y ^N \quad \sum _{z \in Z} \Big( \pi ^N(y-\frac{z}{N}) Q^N(y-\frac{z}{N},y) - \pi ^N(y) Q^N(y,y+\frac{z}{N}) \Big)=0,\] 
  become 
$ \quad \displaystyle{  \frac {e^{-N g(y)}}{Z_N}  \sum _{z \in Z} \mu _y(z) \Big( e^{\langle \nabla g(y), z \rangle} -1 \Big)=0} \ $ under ~\eqref{eq:pi}, 
since
\[  \pi ^N(y-\frac{z}{N})/\pi ^N(y) \approx  e^{\langle \nabla g(y), z \rangle} \quad \text{ and } \quad  Q^N(y-\frac{z}{N},y)\approx  Q^N(y,y+\frac{z}{N}) \approx \mu _y(z).\]
This intuitively confirms the relation between solutions of ~\eqref{eq:nabla} and the quasipotential stated in (ii) of  Theorem ~\ref{thm:modif} (or in Theorem 4.3 of Chapter 5 of  ~\cite{Freidlin:84}), since in the unique equilibrium case, the quasipotential is  expected to be the action functional associated to the invariant distribution.

Secondly, the above proof of (i) suggests that for the modified process,  the measure $(e^{-Ng(y)})_{y\in \Y^N}$  solves the  {\em local}  balance equations in the limit. Indeed,  from ~\eqref{eq:modifrates} and  the proof of (i), $g$  solves 
\begin{align} \label{eq:rev} \mu _y(z) -   \mu _y(-z)  e^{-\langle \nabla g(y), z \rangle}=0 \quad (y \in \Y, z \in Z).\end{align}

This relation appears as the limiting identity obtained from
\[  e^{-Ng(y)} Q^N(y,y+\frac{z}{N}) = e^{-Ng(y+\frac{z}{N})} Q^N(y+\frac{z}{N},y)\]
via the same approximations as in the first part of the present remark. 
  The modified process can thus be considered as asymptotically reversible. This is a particularity of the dynamical system in ~\cite{Antunes:08}. For the analogous  modified version of the process in ~\cite{Gibbens:90}, no $g$ satisfies equation ~\eqref{eq:rev}.

\end{remark}

\section{Lyapunov function and  relative entropy} \label{sec:lyapunov}
 This section is devoted to a rereading of the Lyapunov function $g$, given by ~\eqref{eq:g},  for  the model in  ~\cite{Antunes:08}. It is  connected to a well known decreasing property of  relative entropy.  Two other models can then be introduced, for which a similar relative entropy argument  for constructing a Lyapunov function applies. Convergence of their stationary measure to a Dirac mass can then be derived.

\subsection{A reformulation of the Lyapunov function} \label{subsec:g} 

Section ~\ref{sec:quasipot} has provided an interpretation  of $g$ in terms of the quasipotential associated with  another Markov  random  perturbation of the same dynamical system.
The  Lyapunov property of  $g$  is now given an interpretation  in terms of relative entropy.

 Underlying both interpretations, it appears that the model in  ~\cite{Antunes:08} is very particular, due to  the quantities $[I_k,y]$ that drive the $M/M/C/C$  generators $L_y$ involved in equation ~\eqref{eq:S}.\\

For $y,y' \in \Y$, two probability distributions on $\X$,  the relative entropy of  $y$ with respect to $y'$ is defined as:
\[h(y|y') =\sum _{n \in \X} y_n \log \frac{y_n}{y'_n}. \]
It is non-negative,  and finite if $y' \in \,  \stackrel {\circ}{\Y} = \{ y \in \Y: y_n >0 \text{ for all } n \in \X \}$. 

Relative entropy appears in the following, easily  checked, expression of  $g$:
\begin{align} \label{eq:entg} g(y) = h(y|\nu _{\rho (y)})-\log Z(\rho(y)) +\sum _{k=1}^K \psi _k( [ I_k,y ]) \quad  \quad (y \in \Y) 
\end{align}
where $\nu _{\rho (y)}$ is the Erlang reversible distribution of $L_y$ (here $\displaystyle{\rho _k(y)= \frac{ \lambda _k+\gamma _k [ I_k,y] }{\mu _k + \gamma _k}}$ for $1 \le k \le K$) and:
 \[\psi _k(x)= \int _0^x \frac{\gamma _k u}{\lambda _k+ \gamma _k u} \, du \quad \quad (k=1, \dots ,K).\]
Note that, though no mention of relative entropy is made in ~\cite{Antunes:08}, it is there noticed that critical points of $g$ on $\Y$ correspond, through $\rho \mapsto \nu _{\rho}$, to critical points of some function of $\rho$ (given by the two last terms in the right hand side of ~\eqref{eq:entg}). This was the  clue for a dimension reduction argument.
  
The Lyapunov property of $g$ will  be explained from the classical decreasing  property of the relative entropy between the distribution at time $t$ of some Markov ergodic process and its invariant distribution.  Entropy dissipation and its quantification  are widely present in the literature. Classicaly,  estimating the entropy dissipation is an alternative to logarithmic Sobolev inequalities for obtaining  exponential rates of decay to equilibrium (see for example ~\cite{Caputo:07}).  In a different context,   ~\cite{Yau:91} introduces a method for deriving hydrodynamical limits (see also   ~\cite{Olla:93}, ~\cite{Kipnis:99}) by  controlling the variations of the relative entropy between the current distribution  and the so-called local equilibrium,  varying in time.   Our  situation differs from the usual ones, in the sense that  the generator itself varies in time: $y$ solves  $\dot y (t)=y(t)L_{y(t)}$. However, this can be compensated by adding appropriate terms. This is made possible  because   $\rho _k(y)$'s depend on $y$  only  through  quantities   $[ I_k,y ]$, that naturally appear in the expression of  $ h(y|\nu _{\rho (y)})$.  This method for building a Lyapunov function is thus restricted to very special dynamics. A  different example will be given in ~\ref{subsec:closed}, for which the method applies due to invariance of $[I,y]$ along the  flow.

In  the following proof, the reversibility of  generators $L_y$ manifests itself,  as it is classical, through  the Dirichlet form.\\

\begin{prop}  \label{prop:lyap} Assume that for $y \in \Y$, $L_y$ is the infinitesimal generator of an $M/M/C/C$ queue with $K$ classes of customers  having capacity requirements $A_k$ and arrival-to-service rate ratios    $ \rho _k(y)= \varphi _k ( [ I_k,y])$ for $k=1, \dots ,K$, where the $\varphi _k$  are positive  $C^1$ functions on $\R$. Set $\rho(y)=(\rho _k(y), 1\le k \le K)$ for $y \in \Y$.

Then the following function is a Lyapunov function for  dynamical system ~\eqref{eq:S}:
\[g(y)= h(y| \nu _{\rho(y)}) -\log Z(\rho(y)) +\sum _{k=1}^K \psi _k( [ I_k,y ]), \]
where, for $k=1, \dots ,K$,  $\psi _k$ is some primitive of $\displaystyle{x \mapsto x \frac{\varphi ' _k(x)}{ \varphi _k (x)}}$.

\end{prop}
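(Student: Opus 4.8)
The plan is to show directly that $\langle \nabla g(y), yL_y\rangle \le 0$ for every $y$ in the interior of $\Y$, with equality exactly when $yL_y=0$; equivalently, to show that $g$ decreases along the flow $\dot y = yL_y$. Writing the entropy term as $H(y,\rho(y))$ with $H(y,\rho)=\sum_{n} y_n\log(y_n/\nu_\rho(n))$, the chain rule splits the derivative of $g$ along a trajectory into a part coming from the explicit $y$-dependence of the entropy, with the reference measure $\nu_{\rho(y)}$ held frozen, and a part coming from the motion of $\rho(y)$, to which the two correction terms $-\log Z(\rho(y))$ and $\sum_k \psi_k([I_k,y])$ contribute. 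First I would treat the frozen-reference part, then show that all remaining contributions cancel identically.

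For the frozen part, differentiating $H(\cdot,\rho)$ at fixed $\rho=\rho(y)$ and using $\sum_n (yL_y)_n=0$ gives $\sum_{m,n} y_m L_y(m,n)\log\big(y_n/\nu_{\rho(y)}(n)\big)$, which is the classical entropy production of the reversible generator $L_y$ relative to its invariant Erlang measure $\nu_{\rho(y)}$. Setting $u_n=y_n/\nu_{\rho(y)}(n)$ and $q(m,n)=\nu_{\rho(y)}(m)L_y(m,n)$, reversibility makes $q$ symmetric with vanishing row sums, so a symmetrization of the double sum rewrites it as $\tfrac12\sum_{m\neq n} q(m,n)(u_m-u_n)\log(u_n/u_m)$, which is nonpositive since $q(m,n)\ge 0$ and $(u_m-u_n)\log(u_n/u_m)\le 0$. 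This term vanishes iff $u$ is constant across every edge of positive rate, and irreducibility of the $M/M/C/C$ queue then forces $u$ to be constant, i.e.\ $y=\nu_{\rho(y)}$, i.e.\ $yL_y=0$.

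The crux is that the remaining terms cancel, and this is exactly where the special structure is used: $\rho_k(y)=\varphi_k([I_k,y])$ depends on $y$ only through the marginal means $[I_k,y]$, so $\dot\rho_k=\varphi_k'([I_k,y])\,[I_k,\dot y]$. Computing $\partial_{\rho_k}H=([I_k,\nu_\rho]-[I_k,y])/\rho_k$ together with $\partial_{\rho_k}\log Z(\rho)=[I_k,\nu_\rho]/\rho_k$ (the latter from $\partial_{\rho_k}Z=\rho_k^{-1}Z\,[I_k,\nu_\rho]$), the $\rho$-motion contributions combine into $-\sum_k \rho_k^{-1}[I_k,y]\,\dot\rho_k$. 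Since $\psi_k'(x)=x\,\varphi_k'(x)/\varphi_k(x)$, the final correction term contributes $\sum_k \psi_k'([I_k,y])\,[I_k,\dot y]=\sum_k \rho_k^{-1}[I_k,y]\,\dot\rho_k$, which is exactly the opposite. Hence $\langle\nabla g(y),yL_y\rangle$ reduces to the frozen entropy production alone.

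Combining the two steps yields $\langle\nabla g(y),yL_y\rangle\le 0$ with equality iff $yL_y=0$. It then remains only to record that $g$ is $C^1$ on the interior of $\Y$ and bounded from below: $h\ge 0$, while $[I_k,y]$ stays in a compact interval on $\Y$, so $\rho(y)$ ranges over a compact set and both $\log Z(\rho(y))$ and $\psi_k([I_k,y])$ are bounded. I expect the main work to be the exact cancellation of the third paragraph---verifying $\partial_{\rho_k}\log Z=[I_k,\nu_\rho]/\rho_k$ and matching it against the chosen primitive $\psi_k$---since this is precisely where the hypothesis $\rho_k=\varphi_k([I_k,y])$ enters and where the analogous construction breaks down for dynamics whose arrival rates depend on $y$ in a more general way. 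The entropy-production inequality and the equality/irreducibility argument are standard, and the behaviour of the logarithmic terms at the boundary $\partial\Y$ is only a minor technical point.
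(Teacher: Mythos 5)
Your proposal is correct and follows essentially the same route as the paper: there the cancellation you isolate in your third paragraph is performed at the level of the explicit gradient, yielding $\partial g/\partial y_n=\log\bigl(n!\,y_n/\rho(y)^n\bigr)+1$, after which the same reversibility/Dirichlet-form symmetrization with $q_y(m,n)=\nu_{\rho(y)}(m)L_y(m,n)$ gives $yL_y\nabla g(y)\le 0$, with equality iff $y=\nu_{\rho(y)}$ by irreducibility. The only difference is bookkeeping: you split $\frac{d}{dt}g(y(t))$ by the chain rule into a frozen-reference entropy production plus exactly cancelling $\rho$-motion terms, while the paper encodes the identical cancellation directly in the computation of $\nabla g$.
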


\begin{proof}
Using the definition of relative entropy together with ~\eqref{eq:erlang} gives for $y \in \Y$,
\[g(y)= \sum _{n \in \X} y_n \log \frac{n! y_n}{ \rho (y) ^n } + \sum _k \psi _k ( [ I_k,y ])=  \sum _{n \in \X} y_n \log (n! y_n)  + \sum _k \Big( \psi _k ( [ I_k,y ]) - [ I_k,y ] \log  \rho _k (y) \Big).\]
  Due to the  relation,  for $k=1, \dots ,K$, between  $\psi _k$ and the  $\varphi _k$ satisfying $\rho  _k(y)=\varphi _k ( [ I_k,y ])$, derivation with respect to $y_n$ $(n\in \X)$ simply  yields:
\begin{align} \label{eq:nablag} \frac{\partial g(y)}{\partial y_n}= \log\frac{n! y_n}{\rho (y)^n } +1  .\end{align}
It must be proved that $  yL_y \nabla g(y)  \le 0$ for all $y \in \Y$, equality holding only when $yL_y=0$.
Now $  yL_y \nabla g(y) $ can be expressed using the following identity due to  reversibility of generator $L_y$ with respect to distribution $\nu _{\rho (y)}$:
 for $u \in \R ^{\X}$,
 \begin{align} \label{eq:u} yL_y u  =  - \frac{1}{2} \sum _{(m,n) \in \X ^2} q_y(m,n) \Big(\frac{y_m}{\nu _{\rho(y)}(m)}   -\frac{y_n}{\nu _{\rho(y)}(n)} \Big)( u_n -u_m)
\end{align}
where $q_y(m,n)= \nu _{\rho(y)}(m) L_y(m,n)$ is non-negative and symmetric in $(m,n)$.\\
(The last term is the Dirichlet form associated to $L_y$  evaluated at vectors $\displaystyle{ \Big (\frac{y_n}{ \nu _{\rho (y)}(n)} \Big)}$ and $u$.)
Then using ~\eqref{eq:nablag},
\[  yL_y \nabla g(y)  =  - \frac{1}{2} \sum _{(m,n) \in \X ^2} q_y(m,n) \Big(\frac{y_m}{\nu _{\rho(y)}(m)}   -\frac{y_n}{\nu _{\rho(y)}(n)} \Big)\Big(\log  \frac{y_m}{\nu _{\rho(y)}(m)}   - \log \frac{y_n}{\nu _{\rho(y)}(n)}\Big). 
\]
This shows that $  yL_y \nabla g(y)  \le 0$ for all $y$. It can be zero only if  $y_m/\nu _{\rho(y)}(m)   =y_n/\nu _{\rho(y)}(n)$ for all pair $m,n$ such that $q_y(m,n)>0$. By irreducibility of $L_y$, this is possible only if $y_n/\nu _{\rho(y)}(n)$ does not depend on $n$, which means that $y=\nu _{\rho(y)}$.

\end{proof}

\subsection{Some Statistical Mechanics formalism} \label{mecastat}
Measures $\nu _{\rho}$ with $\rho \in ]0,+\infty[^K$, that include the fixed points of all our models, can be written in the  following Gibbs form:
\[ \nu _{\rho}(n)= \frac{1}{Z(\rho)}\exp \sum _k (n_k \log \rho _k - \log n_k!)=  \frac{1}{Z(\theta)}\exp (\langle \theta, n \rangle - \log n!)   \quad (n\in \X).\]
where $\theta = (\theta _1, \dots ,\theta_K)$ is defined by $\theta _k = \log \rho _k$  ($1 \le k \le K$) and  appears as the natural Gibbs parameter. It  is then convenient to re-parametrize the family $(\nu _{\rho})$ as $(\nu _{\theta})$ for $\theta \in \R ^K$ (abusively  writing $Z(\theta)$ for $Z(\rho)$). This  labelling is  \mbox{clearly one-to-one.}

We now address  the problem of minimizing the relative entropy distance of a given probability measure $y$ on $\X$ to the set of $\nu_{\theta}$'s.
 It is related  to some classical results in Statistical Mechanics. Yet, our arguments may  not  be standard. A more complete overview can be found in ~\cite {Ellis:85}, where this  problem underlies some contraction principles related to  Large Deviations of i.i.d.~random vectors.
Only the case $K=1$ is relevant for the present paper, since the two next models are one class, but  Proposition ~\ref{prop:prox}, is worth mentioning in the multidimensional case  for its own sake, or for possible extension of  section ~\ref{subsec:open}  to several classes.  

First, it is   classical  that derivating the ``free energy" $\log Z$ gives the ``magnetization", that is, the expectation of the Gibbs measure: 
\[ \nabla  \log Z(\theta)=[ I, \nu _{\theta} ]. \]
Note that for $Z$ regarded as a function of $\rho$, it gives
\[ \frac{\partial \log Z(\rho)}{\partial \rho _k}= \frac{1}{\rho _k}[ I_k ,\nu _{\rho} ] = 1- B_k(\rho) \quad (k=1, \dots ,K), \]
using the well known relation $[ I_k ,\nu _{\rho} ] = \rho _k (1- B_k(\rho))$. Here for $k=1, \dots ,K$, 
\[B_k(\rho)= \sum _{n \in \X: n+f_k \notin \X} \nu _{\rho}(n)\]
 is the so-called ``blocking probability" corresponding to class $k$ for parameter $\rho$, that is, the probability  that a new class $k$ customer is rejected in an $M/M/C/C$ queue with load $\rho$ in its stationary regime.\\

Next, $\log Z(\theta)$ is strictly convex with respect to $\theta$: This can be shown using the relative entropy  between two Gibbs measures $\nu _{\theta}$. Indeed for $\theta, \theta ' \in \R ^K$
\begin{align} \label{eq:gibbs_entropy}  h(\nu _{\theta} | \nu _{\theta '}) = \sum _{n \in \X} \nu _{\theta}(n) \log \frac{\nu _{\theta}(n)}{\nu _{\theta '}(n)} = \log \frac{ Z(\theta ')}{Z(\theta)} + \langle [I, \nu _{\theta}], \theta '-\theta \rangle.
\end{align}
which rewrites
\[ \log Z(\theta ') -\log Z(\theta) = \langle \nabla \log Z_{| \theta}, \theta '-\theta \rangle +  h(\nu _{\theta } | \nu _{\theta '}) .\]
This shows that the non-negative quantity $h(\nu _{\theta } | \nu _{\theta '}) $  (positive if $\theta ' \neq \theta$)  measures the difference between the graph of $\log Z$ and its tangent hyperplane at $ \theta$. It proves strict convexity of $\log Z( \theta)$.

Properties of the relative entropy of some $y \in \Y$ with respect to some $\nu _{\theta}$ can then be derived. For $y \in \Y$ and $\theta \in \R ^K$:
\begin{align} \label{eq:entropy} h(y| \nu _{\theta }) = \sum _{n \in \X} y_n \log \frac{y_n}{\nu _{\theta }(n)} = \log Z(\theta)  +\sum _n y_n \log (n!y_n) -\langle [I, y], \theta  \rangle,
\end{align}
so that for fixed $y$, the relative entropy $h(y| \nu _{\theta }) $ is  also strictly convex in $\theta$ (as the sum of $\log Z(\theta)$ and an affine function of $\theta$). 

It is not difficult to show that if $y \in \stackrel{\circ}{\Y}$ (that is $y \in\Y$ and $y_n>0$ for all $n$), then $h(y| \nu _{\theta }) $ tends to infinity as $\| \theta \| \to + \infty$: Indeed,  for any $\theta \in \R ^K$, the following inequalities hold:
  \begin{multline*} \log Z(\theta) -\langle [I, y], \theta  \rangle  \ge   \max _{n \in \X} \, (\langle \theta, n \rangle - \log n!)  -\langle [I, y], \theta  \rangle \\
   \ge \max _{n \in \X} \, \langle n, \theta \rangle -  \langle [I, y], \theta  \rangle - \max _{n \in \X} (\log n!). 
   \end{multline*}
Now setting $h_y(\theta)=  \max _{n \in \X}  \langle n, \theta \rangle -  \langle [I, y], \theta  \rangle = \sum _m y_m ( \max _n   \langle n, \theta \rangle - \langle m, \theta \rangle)$,  $h_y(\theta)$ is positive for any $ \theta \ne 0$ since all $y_n$ are positive and the terms  $ \langle m, \theta \rangle, m \in \X,$ cannot be all equal (recall that, since  $A_k \le C$  for $k =1, \dots , K$, the set $\X$ contains $0$ together with  the canonical vectors $f_1, \dots , f_K$).  Since $h_y$ is continuous on $\R^K$, it is bounded from below by some positive constant on the unit sphere of $\R^K$. Then using  the fact that $h_y(\theta) = \| \theta \| \, h_y( \theta / \| \theta \|)$ for all non-zero $\theta$, it results that $h_y$, and hence   $h(y| \nu _{\theta }) $, tends to infinity as $\| \theta \| \to + \infty$.

 As a consequence, $h(y| \nu _{\theta }) $ attains one unique minimum on $\R ^K$ at some value denoted $\bar {\theta} (y)$. Derivating $h(y| \nu _{\theta }) $ with respect to $\theta$ gives  that $\bar {\theta} (y)$ is the unique solution of
\[ [I, \nu _{\theta }] = [I, y]. \]
(Note that unicity also appears on the following relation, itself derived from ~\eqref{eq:gibbs_entropy}:
\[ h(\nu _{\theta} | \nu _{\theta '})  + h(\nu _{\theta '} | \nu _{\theta }) = \langle [I, \nu _{\theta '}]- [I, \nu _{\theta}], \theta '-\theta \rangle,\]
which proves  that equality  $[I, \nu _{\theta '}]= [I, \nu _{\theta}]$ is possible only if $\nu _{\theta '}=\nu _{\theta }$, i.e., $\theta  '= \theta$.)

One  gets  the following result: 
\begin{prop} \label{prop:prox}
For any $y \in \stackrel{\circ}{\Y}$, there exists one unique  $\theta \in \R ^K$, denoted $\bar {\theta} (y)$ such that $[I, \nu _{\bar {\theta} (y)}] = [I, y]$.
This $\bar {\theta }(y)$ minimizes  $\theta \mapsto h(y| \nu _{\theta }) $, and moreover  satisfies
\[h(y| \nu _{\theta  })  = h(y| \nu _{\bar {\theta} (y)})  + h(\nu _{\bar {\theta} (y)}| \nu _{\theta }) \quad \text{ for any } \theta \in \R ^K. \] 

\end{prop}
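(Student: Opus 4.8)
The plan is to deduce the existence, uniqueness and minimizing property of $\bar\theta(y)$ directly from the structural facts assembled just before the statement, and then to obtain the Pythagorean identity by a short computation combining \eqref{eq:entropy} with the convexity relation for $\log Z$.

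First I would record the variational part. By \eqref{eq:entropy}, $\theta\mapsto h(y|\nu_\theta)$ is the sum of the strictly convex function $\log Z(\theta)$ and an affine function of $\theta$, hence strictly convex; and, for $y\in\stackrel{\circ}{\Y}$, it was shown above to tend to $+\infty$ as $\|\theta\|\to\infty$. A strictly convex, continuous, coercive function on $\R^K$ has a unique minimizer, which I denote $\bar\theta(y)$. Differentiating \eqref{eq:entropy} in $\theta$ and using $\nabla\log Z(\theta)=[I,\nu_\theta]$ gives $\nabla_\theta h(y|\nu_\theta)=[I,\nu_\theta]-[I,y]$, so the first-order condition characterizing the minimizer is exactly $[I,\nu_{\bar\theta(y)}]=[I,y]$; strict convexity guarantees there is no other critical point, hence no other solution of this moment equation. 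This establishes at once the existence, the uniqueness, and the fact that $\bar\theta(y)$ minimizes $\theta\mapsto h(y|\nu_\theta)$.

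It then remains to prove the identity. From \eqref{eq:entropy} the $\theta$-independent terms $\sum_n y_n\log(n!y_n)$ cancel in the difference, leaving
\[
h(y|\nu_\theta)-h(y|\nu_{\bar\theta(y)})=\log Z(\theta)-\log Z(\bar\theta(y))-\langle[I,y],\,\theta-\bar\theta(y)\rangle.
\]
On the other hand, the convexity relation displayed just after \eqref{eq:gibbs_entropy}, applied with first Gibbs index $\bar\theta(y)$ and second index $\theta$, rearranges to
\[
h(\nu_{\bar\theta(y)}|\nu_\theta)=\log Z(\theta)-\log Z(\bar\theta(y))-\langle[I,\nu_{\bar\theta(y)}],\,\theta-\bar\theta(y)\rangle.
\]
Substituting the defining property $[I,\nu_{\bar\theta(y)}]=[I,y]$ into this last expression makes its right-hand side coincide with the previous one, which is precisely the asserted $h(y|\nu_\theta)=h(y|\nu_{\bar\theta(y)})+h(\nu_{\bar\theta(y)}|\nu_\theta)$ for every $\theta\in\R^K$.

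Since every ingredient is already available, I do not anticipate a genuine obstacle; the only point demanding care is the bookkeeping of signs in the two-Gibbs-measure entropy relation (the inner-product term must carry a minus sign, in agreement with the convexity rewrite of $\log Z$), and the observation that it is exactly the moment-matching condition $[I,\nu_{\bar\theta(y)}]=[I,y]$ that forces the two linear terms to cancel. Conceptually the statement is the Pythagorean theorem for the information projection of $y$ onto the exponential family $(\nu_\theta)_{\theta\in\R^K}$, but the direct computation above is self-contained and uses nothing beyond \eqref{eq:entropy} and the convexity relation for $\log Z$.
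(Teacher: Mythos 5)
Your proof is correct and follows essentially the same route as the paper: the existence, uniqueness and minimizing property come exactly as in the discussion preceding the proposition (strict convexity of $\theta\mapsto h(y|\nu_\theta)$ via \eqref{eq:entropy}, coercivity for $y\in\stackrel{\circ}{\Y}$, and the first-order condition $[I,\nu_\theta]=[I,y]$), while the Pythagorean identity is the paper's own one-line deduction from \eqref{eq:entropy}, \eqref{eq:gibbs_entropy} and moment matching, which you simply write out in full. Your sign remark is well taken: \eqref{eq:gibbs_entropy} as displayed should read $-\langle[I,\nu_\theta],\theta'-\theta\rangle$ rather than $+$, and your computation correctly uses the consistent (convexity-rewrite) form.
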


\begin{proof}
Only  the last relation is left to prove, but it is a direct consequence of ~\eqref{eq:entropy}, ~\eqref{eq:gibbs_entropy} and $[I, \nu _{\bar {\theta} (y)}] = [I, y]$.
\end{proof}

\begin{remark}
From the additive formula  in the previous proposition, it also results that  for any fixed  $ \theta, {\bar {\theta} } \in \R^K$,  the measure $ \nu _{\bar {\theta} }$  minimizes   $y \mapsto h(y| \nu _{\theta }) $ on the set of probability measures $y$   having mean $[ I, \nu _{\bar {\theta} } ]$.  Theorem VIII.4.1 in ~\cite {Ellis:85} shows that this  minimum value  can  be read as some Legendre-Fenchel transform. 
\end{remark} 

As a corollary, getting back to the $\rho$-parametrization, one gets that the mapping $\rho \mapsto [I, \nu _{\rho}]$ is one to one from the set $]0 ,+ \infty[^K$ onto the set $\{ \sum _{n \in \X}  ny_n, y \in \stackrel{\circ}{\Y } \}$. When $K=1$, the  last set is simply the interval $]0,C[$, but for larger $K$ it is not easy to characterize. Like the convex hull of $\X$: $\{ \sum _{n \in \X}  ny_n, y \in \mathcal \Y \}$,  it  arithmetically depends on integers $C$ and $A_k$'s  in some intricate manner. In particular it does not  coincide in general with the set $\{ m\in ]0, +\infty[^K: \sum _k A_k m_k <C \}$.

\begin{remark}
The term  $\langle [I, y], \theta  \rangle$ in relation ~\eqref{eq:entropy} explains the particularity of  the case  where $\rho  _k(y)$'s are functions of  $[I _k, y]$'s,  as considered in Proposition ~\ref{prop:lyap}. Another  nice situation, which is the case for the next model,  is when  $[I, y]$ remains constant along the trajectories of the dynamical system.

\end{remark}

\subsection{A closed system}\label{subsec:closed}
The next model again describes a system of $N$ nodes with the same capacity $C$, but here no rejection can occur: Customers are directly routed towards non saturated nodes (one can imagine that they  are randomly rerouted as many times as necessary, at a null time cost, so as to find some node having the required free capacity).

In this model, there are no external arrivals nor departures.  $M$ customers are present for ever in the system, with $M<NC$. Each customer spends an exponential time  with mean one at each visited node, after which he chooses uniformly one new node among those, different from the current node, that are not saturated (if the current position is the only non saturated one, the customer does not move). All exponential variables  and choices of successive nodes are assumed independent. The model is analyzed through the following asymptotics: $N$ and $M=M(N)$ tend to infinity with $M(N)/N$ converging to some $\lambda \in ]0,C[$.

For fixed $N$ and $M$  with $M<NC$, the empirical measure process is defined as in ~\eqref{eq:Y} and here denoted $(Y_M^N(t), t\ge 0)$. It is a  Markov jump process on the finite subset $\Y_M^N = \{y=(y_n, 0 \le n \le C) \in \Y: Ny_n \in \N \text{ for } n= 0, \dots ,C \text{ and } [I,y ]=\sum _{n=1}^C ny_n=M/N \}$ of $\Y=   \{y=(y_n, 0 \le n \le C) \in [0, + \infty[^{C+1}: \sum _{n=0}^C y_n=1\} $. The transitions are the following ($e_n$ still denotes the  $n^{th}$ unit vector in $\R^{C+1}$):
\begin{align} \label{eq:closed} y \longrightarrow  y+\frac{1}{N} \big( e_{m+1}-e_m  +e_{n-1}-e_n\big) \text{  at rate }  N ny_n \frac{Ny_m-\indicator {m=n}}{N(1-y_c)},
\end{align}
for $0 \le n \le C$ and  $0 \le m \neq n \le C-1$.
(This corresponds to some move from some node in state $n$ to some node in state $m$. Note that due to the condition $M<NC$, $y_C<1$ for $y \in \Y^N_M$.)
It is clear that $Y_M^N$ is irreducible and thus admits one unique invariant distribution.

\begin{remark}
Extension of this model to the case with $K \ge 2$ classes of customers (with capacity requirements $A_1, \dots ,A_K$) is problematic, since in this case, the Markov process analogous to $Y^N_M$ is possibly non irreducible. For example, consider the case of $N$ nodes, each of capacity $6$, and $2N$ customers,  $N$  of each of two classes with $A_1=3$ and $A_2=2$. The distribution with  one customer of each class at each node does not communicate with any other configuration (e.g., for  $N$  even, with the configuration with $N/2$ nodes occupied by two class  $1$ customers and the  $N/2$ others by two class $2$ customers), because only one unit of free capacity is available at each node. And yet, the total free capacity goes to infinity  proportionally to $N$.

This problem could be solved   in some sense by replacing this closed system by the open  (irreducible) one next introduced  in ~\ref{subsec:open}, extended to the multiclass case (see Remark  ~\ref{remark:multiclass} below), that should have the same limiting dynamics in restriction to $\Y _{\lambda}$.
\end{remark}

Analogously to the previous examples, it can be  shown that as $N,M$ go to infinity with $M/N $ converging to $\lambda$, the  process $(Y^N_M(t), 0 \le t \le T)$ converges  in distribution, for any finite  $T$,  to the solution  with initial value $y(0)$ of the following  differential system of equations: 
\begin{multline} \label{eq:closed_dyn} y_n'(t)= \frac{\lambda}{1-y_C} y_{n-1}(t) \mathds 1_ {n\ge 1} + (n+1) y_{n+1} (t) \mathds 1_ {n\le C-1}
 \\ - \Big(\frac{\lambda}{1-y_C} \mathds 1_ {n\le C-1} +n \Big) y_n(t) \quad (n=0, \dots ,C).
\end{multline}
 provided that $Y^N_M(0)$ converges in distribution to $y(0)$.

Since $M/N$ converges to $\lambda$, the initial point $y(0)$ necessarily belongs  to the set $\Y_{\lambda}= \{y \in \Y: \sum _n ny_n=\lambda \}$. As the next proposition will show, unsurprisingly, 
 the set $\Y _{\lambda}$ is invariant under the above system. Now since $\lambda <C$,  $y_c<1$ for  any $y\in \Y_{\lambda}$, so that for $y(0) \in \Y_{\lambda}$, the solution of \eqref{eq:closed_dyn} is  defined on the whole time axis.

This system rewrites as ~\eqref{eq:S} where now, for $y \in \Y _{\lambda}$,  $L_y$ is the rate matrix of an $M/M/C/C$ queue with arrival rate $\rho(y)=\frac{\lambda}{1-y_C} $ and service rate $1$.

\vspace{1mm}

Equilibrium points are again characterized by ~\eqref{eq:fix}, where $\rho(y)= \frac{\lambda}{1-y_C}$ and $\nu _{\rho}$ is defined by ~\eqref{eq:erlang} for $\rho>0$. This fixed point equation has one unique solution. Indeed, since $0< \lambda < C$, Proposition ~\ref{prop:prox} above shows that there is a  unique $\rho \in ]0,+ \infty[$ such that $[ I,\nu _{\rho}]= \lambda$.
Denote  it $\rho _{\lambda}$. Equation ~\eqref{eq:fix} rewrites: $y=\nu _{\rho}$ with $\rho= \frac{\lambda}{1-B(\rho)} $ and $B(\rho)= \nu _{\rho}(C)$, or equivalently, using the relation $[ I,\nu _{\rho} ]=\rho (1-B(\rho)) $: $[ I,\nu _{\rho } ]= \lambda$, so that $\nu _{\rho _{\lambda}}$ is the unique equilibrium point for ~\eqref{eq:S} on $\Y _{\lambda}$.

\begin{prop} \label{prop:closed} Fix $\lambda \in ]0,C[$ and assume that for $y \in \Y _{\lambda}$,  $L_y$ is the rate matrix of an $M/M/C/C$ queue with arrival rate $\rho(y)=\frac{\lambda}{1-y_C} $ and service rate $1$.  Then

(i)  The set $\Y_{\lambda}$ is invariant under the dynamical system ~\eqref{eq:S}.

(ii)   $g(y)=h(y| \nu _{\rho _{\lambda}}) $ is a Lyapunov function for this dynamical system on $\Y_{\lambda}$.

(iii) The sequence of invariant probability distributions $\pi ^N$ of processes $Y^N_{M(N)}$ converges weakly to the Dirac mass at $\nu _{\rho _{\lambda}}$ as $N,M(N) \to \infty$ with $M(N)/N \to \lambda$. 

\end{prop}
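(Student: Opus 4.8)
The three parts build on one another, so the plan is to establish them in the stated order, reusing the entropy machinery of Sections \ref{subsec:g} and \ref{mecastat}.

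For part (i), the invariance of $\Y_{\lambda}$ under \eqref{eq:S}, I would compute the time derivative of $[I,y(t)]=\sum_n n y_n(t)$ along a trajectory. Since the closed system has no arrivals or departures, only internal moves of the form $e_{m+1}-e_m+e_{n-1}-e_n$, each jump leaves the total number of customers unchanged; this should translate into $\frac{d}{dt}[I,y(t)]=0$. Concretely, I would write $\frac{d}{dt}[I,y]=\sum_n n\,(yL_y)_n=\langle I, yL_y\rangle$ and verify this vanishes. The cleanest route is to observe that $L_y$ is conservative ($L_y\mathds{1}=0$) and that the vector field $m_y=yL_y$ is built from jumps $z=e_{n-1}-e_n+e_{m+1}-e_m$ all satisfying $\langle I,z\rangle = (n-1-n)+(m+1-m)=0$, so $\langle I, m_y\rangle=0$ identically. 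Hence $[I,y(t)]$ is constant and $\Y_{\lambda}$ is invariant.

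For part (ii), I would invoke Proposition \ref{prop:lyap} with $K=1$. Here $\rho(y)=\frac{\lambda}{1-y_C}$ is \emph{not} a function of $[I,y]$ alone, so Proposition \ref{prop:lyap} does not apply verbatim; however, on the invariant set $\Y_{\lambda}$ the quantity $[I,y]=\lambda$ is constant, so one may hope to drop the correction terms $-\log Z(\rho(y))+\sum_k\psi_k([I_k,y])$ entirely and use the bare $g(y)=h(y|\nu_{\rho_{\lambda}})$. The key point is that on $\Y_{\lambda}$ the reference measure is the \emph{fixed} equilibrium $\nu_{\rho_{\lambda}}$, not the moving target $\nu_{\rho(y)}$. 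I would compute $\frac{\partial g}{\partial y_n}=\log\frac{n!y_n}{\rho_{\lambda}^{\,n}}+1$ (using \eqref{eq:erlang}), then evaluate $yL_y\nabla g(y)$ via the reversibility identity \eqref{eq:u}, with $q_y(m,n)=\nu_{\rho(y)}(m)L_y(m,n)$ symmetric. The resulting expression is $-\tfrac12\sum_{(m,n)}q_y(m,n)\bigl(\tfrac{y_m}{\nu_{\rho(y)}(m)}-\tfrac{y_n}{\nu_{\rho(y)}(n)}\bigr)\bigl(u_n-u_m\bigr)$ with $u_n=\log\frac{n!y_n}{\rho_{\lambda}^{\,n}}$. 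The subtle obstacle here is that $\nabla g$ is expressed relative to $\rho_{\lambda}$ while the Dirichlet form uses $\nu_{\rho(y)}$ (relative to $\rho(y)$); I would need to reconcile the two by noting that $u_n-u_m = \log\frac{y_n/\nu_{\rho(y)}(n)}{y_m/\nu_{\rho(y)}(m)} + (n-m)\log\frac{\rho(y)}{\rho_{\lambda}}$, and then show that on the invariant manifold the extra logarithmic drift term contributes $\log\frac{\rho(y)}{\rho_{\lambda}}$ times $\langle I,m_y\rangle=0$ (by part (i)), so it drops out. What survives is again a nonnegative sum of products $(a_n-a_m)(\log a_n-\log a_m)$ with $a_n=y_n/\nu_{\rho(y)}(n)$, hence $yL_y\nabla g(y)\le0$, with equality forcing $a_n$ constant, i.e.\ $y=\nu_{\rho(y)}$, an equilibrium. \textbf{This reconciliation is the main obstacle}, since it is exactly where the non-functional dependence of $\rho$ on $y$ must be absorbed by the conservation law of part (i).

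For part (iii), I would follow the scheme described at the end of Section \ref{subsec:AFRT}. First, $Y^N_{M(N)}$ is irreducible on the finite set $\Y^N_M$, so a unique $\pi^N$ exists; by tightness (the $\pi^N$ live on the compact $\Y$) every subsequence has a weak limit $\pi$. Next I would argue that $\pi$ is an invariant measure for the limiting generator $\Omega f(y)=yL_y\nabla f(y)$, using convergence of the discrete generators $\Omega^N$ to $\Omega$ on $C^1$ test functions and the standard passage $\int\Omega^N f\,d\pi^N=0\Rightarrow\int\Omega f\,d\pi=0$. Since $\pi$ is supported on $\Y_{\lambda}$ (as $[I,Y^N]=M/N\to\lambda$), and since by (ii) $g=h(\cdot|\nu_{\rho_{\lambda}})$ is a strict Lyapunov function on $\Y_{\lambda}$ with the \emph{unique} equilibrium $\nu_{\rho_{\lambda}}$, any invariant measure of $\Omega$ on $\Y_{\lambda}$ must be the Dirac mass at $\nu_{\rho_{\lambda}}$: integrating $\Omega g=yL_y\nabla g\le0$ against the invariant $\pi$ gives $\int yL_y\nabla g\,d\pi=0$, which forces $\pi$ to concentrate where the integrand vanishes, namely on equilibria, of which there is only one. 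Hence every subsequential limit equals $\delta_{\nu_{\rho_{\lambda}}}$, giving full weak convergence. The only care needed is justifying that $g$, though a relative entropy, is continuous and bounded below on the compact $\Y_{\lambda}$ (immediate since $\nu_{\rho_{\lambda}}\in\stackrel{\circ}{\Y}$) so that the Lyapunov argument and the integration against $\pi$ are legitimate.
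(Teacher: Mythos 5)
Your proposal is correct and follows essentially the same route as the paper's proof: (i) conservation of $[I,y]$ along the flow, (ii) the Dirichlet-form entropy-dissipation computation in which the conservation law from (i) absorbs the mismatch between the fixed $\rho_{\lambda}$ and the moving $\rho(y)$, and (iii) the generator-convergence / Lyapunov / compactness scheme. The only slight imprecision is your claim that $\langle I, yL_y\rangle = 0$ \emph{identically}: with $L_y$ as defined in the proposition one has $yL_y\nabla f(y)=\lambda-[I,y]$ for $f(y)=[I,y]$ (this is the paper's computation ~\eqref{eq:mean}), which vanishes only on $\Y_{\lambda}$ --- but tangency on $\Y_{\lambda}$ is exactly what invariance and the cancellation in (ii) require, so the argument stands.
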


\begin{proof}
(i) Set $f(y)= \sum _n ny_n$ ($y \in \Y$).  Then $\nabla f(y)=(n, 0 \le n \le C)$.  The derivative of $f$ along the dynamical system ~\eqref{eq:closed_dyn} is given by $yL_y \nabla f(y)$. Using ~\eqref{eq:u}  with the present reversible generators $L_y$ and $u_n=n$ for  $0 \le n \le C$ gives
\[ yL_y \nabla f(y)= - \sum _{n=0}^{C-1} q_y(n,n+1) \Big(\frac{y_{n+1}}{\nu _{\rho(y)}(n+1)}   -\frac{y_n}{\nu _{\rho(y)}(n)} \Big),\]
where $q_y(n,n+1)=  \nu _{\rho(y)}(n)L_y(n,n+1)=  \nu _{\rho(y)}(n+1)L_y(n+1,n)$ so that
\begin{multline}  \label{eq:mean}  yL_y \nabla f(y)= -\sum _{n=0}^{C-1} \Big( y_{n+1} L_y(n+1,n) -y_nL_y(n,n+1) \Big)\\
=- \sum _{n=0}^{C-1} \Big( (n+1) y_{n+1}  - \rho(y) y_n \Big)= (1-y_C) \rho(y) - \sum _n ny_n =0 \end{multline}
for $y \in \Y _{\lambda}$, which proves invariance of $\Y _{\lambda}$ under ~\eqref{eq:closed_dyn}. 

(ii)  Using relation ~\eqref{eq:entropy} together with invariance of $[I,y ]$ under the present dynamical system gives for $y \in \Y _{\lambda}$
\[  yL_y \nabla g(y)= yL_y \Big(\log (n!y_n)+1\Big) _{0 \le n \le C}= yL_y \Big(\log \frac{n!y_n}{Z(\rho (y))}\Big)_{0 \le n \le C} , \]
(as shown by ~\eqref{eq:u},  $ yL_y u $ is inchanged through adding some constant to   $u$).
Then, again using invariance of $[ I,y ]$ under the dynamics, which writes $ yL_y (n,0 \le n \le C)  =0$, one also has
\[  yL_y \nabla g(y) =  yL_y \Big(\log \frac{n!y_n}{ \rho (y) ^n Z(\rho (y))}\Big) _{0 \le n \le C} = yL_y \Big(\log \frac{y_n}{ \nu_{\rho (y)} (n)}\Big)_{0 \le n \le C} .\]
The argument is then the same as in the proof of Proposition ~\ref{prop:lyap}.

\vspace{1mm}
(iii) We only give a sketch of the proof, which  is the same as  in ~\cite{Antunes:08}. It is first proved that the infinitesimal generator $ \Omega ^N$ of the Markov jump process given by ~\eqref{eq:closed} converges, as $N, M\to \infty$ with $M=M(N)$ and $M/N \to \lambda$, to the degenerate generator given by: $\Omega f (y) =  y L_y \nabla f(y) $. This convergence holds for $C^2$ functions $f$ on $\Y$, and is uniform in $f$. It is then standard that any weak limit $\pi$ of the invariant distributions $(\pi ^N)$  of generators $\Omega ^N$,  solves $\pi \Omega =0$.

The Lyapunov function $g$ helps then proving that the Dirac mass at  the  unique equilibrium point  $\nu _{\rho _{\lambda}}$  of the dynamical system ~\eqref{eq:closed_dyn} is the only  invariant probability measure for generator $\Omega$. Indeed, $\pi \Omega =0$ implies in particular that $\pi \Omega g = \int _{\Y} yL_y \nabla g(y)  \, \pi (dy)=0$. The integrand being non-positive, and zero only at $\nu _{\rho _{\lambda}}$, $\pi$ needs be supported by this single state. 

One concludes that the Dirac mass at $\nu _{\rho _{\lambda}}$ is the only weak limit of the sequence $(\pi ^N)$, so by  compactness of  $\mathcal P(\Y)$, it is the limit of $(\pi ^N)$.

\end{proof}

\begin{remark}
Note that the above $g$ could be replaced by $h(y)=h(y| \nu _{\rho}) $ for any fixed $\rho$ since, by Proposition ~\ref{prop:prox}, $g$ and $h$ only differ by the constant $h(\nu _{\rho _{\lambda}}| \nu _{\rho}) $.

\end{remark}

\subsection{An open version} \label{subsec:open}
The next model is analogous to the previous one, in the sense that  customers are instantly directed to available nodes. But now,  new customers  enter the network, are served at some node and then leave the network. Only one class of customers,  requiring one unit of capacity, is considered (see Remark~\ref{remark:multiclass} below for possible extension to several classes). Customers enter the system, still consisting of $N$ nodes, according to some Poisson process with intensity $\lambda N$. Each customer is instantaneously directed, if possible,  toward  some node chosen uniformly among those having one free unit of capacity, that he then occupies during an exponentially distributed time with mean $1$. If no such node exists, the customer is definitively rejected.

We denote by $Y^N(t)$ ($t\ge 0$) the empirical distribution of the nodes at time $t$, defined as in ~\eqref{eq:Y}. Its state space is the finite subset  $\Y ^N =   \{y=(y_n,0 \le n\le C) \in \Y: Ny_n \in \N \text{ for } n= 0, \dots , C\}$ of  $\Y= \{y=(y_n,0 \le n\le C) \in [0, + \infty [ ^{C+1}:  \sum _ny_n=1 \}$.

 Notice that the total number of customers present in the system: $N \sum _n nY^N_n= N[I, Y^N]$  is simply an $M/M/CN/CN$ queue with arrival rate $\lambda N$ and service rate $1$.  Considering  $[I, Y^N]$ as $N$ grows to infinity is equivalent to  Kelly scaling for the $M/M/C/C$ queue. When $\lambda \ge C$ the situation is simple: In the limit, the renormalized  queue gets close to some deterministic trajectory that is constant  equal to $C$ after some time. This means that after some time, for large $N$, the system is saturated. 

It will be assumed that $0 <\lambda <C$, so as to maintain the system in a no-rejection regime. Indeed this case corresponds to the subcritical  regime of Kelly's asymptotic: $[I, Y^N]$  converges to some process  with values in $]0,C[$ for $t>0$ (having limit $\lambda $  at infinity) so that in the limit, no rejection occurs.\\

 $(Y^N(t), t \ge 0)$ is a family of irreducible Markov processes on $\Y$ with the following transitions, respectively corresponding to some arrival or departure at some node in state $n \in \X$:
\[ y \longrightarrow \left \{ \begin{array}{l}  y+\frac{1}{N} (e_{n+1}-e_n) \\ \\
 y+\frac{1}{N} (e_{n-1}-e_n)
\end{array} \right.
\text{at rate}  \begin{array}{ll}
 \lambda   N \frac{y_n}{1-y_C} \mathds 1 _{ y_C<1} & (0 \le n \le C-1) \\
\\ N n y_n & (0 \le n \le C)

\end{array} \]
Since each $Y^N$ actually evolves in a finite subset of $\Y$, these processes are ergodic; their invariant distributions will be denoted  $\pi ^N$. 

 Apart from  the particular case $C=1$, in which coordinate  $Y_1^N$  of $Y^N$  (that is the proportion of occupied nodes) is  itself a renormalized $M/M/N/N$ queue, $Y^N$ is non-reversible and its invariant distribution is not explicitly known.

The process $ Y^N(t)$ converges in distribution, on any finite time interval $[0,T]$,  to  the solution of the same differential system  ~\eqref{eq:closed_dyn} as in the  closed case, here considered on the enlarged
space $\Y \setminus \{\delta _C\}$, instead of $\Y_{\lambda}$. 
This holds provided that $Y^N(0)$ converges in distribution to  some $y(0)$ such that $y_C(0) <1$. The assumption that $\lambda <C$ is crucial here,   ensuring that the condition  $y_C <1$ is preserved  in time: Indeed the last equation in the above differential system writes $y'_C(t)= \lambda \frac{y_{C-1}(t)}{1-y_C(t)}  -C  y_C(t) \le \lambda -C   y_C(t)<0$  for $  y_C(t)$  close to $1$, so that $y_C$ cannot reach the value $1$, guaranteeing existence and unicity of  a solution to ~\eqref{eq:closed_dyn} defined for all positive times. 
\\

This system can  again be read as ~\eqref{eq:S} where $L_y$ is the infinitesimal generator of an $M/M/C/C$ queue with arrival rate $ \frac{\lambda }{1-y_C}  $ and service rate $1$. Equilibrium points are thus characterized by ~\eqref{eq:fix}, where $\rho (y)=  \frac{\lambda }{1-y_C}$, or  by $y=\nu _{\rho}$ with $\rho= \frac{\lambda}{1-B(\rho)} $. Then as in the previous model, using the relation $[ I,\nu _{\rho}]=\rho (1-B(\rho)) $ together with Proposition ~\ref{prop:prox},  $\nu _{\rho _{\lambda}}$ is the unique equilibrium point for ~\eqref{eq:S} on $\Y _{\lambda}$, where $\rho _{\lambda}$ is characterized by $[I, \nu _{\rho _{\lambda}}]=\lambda$. 

\begin{prop}
As $N$ goes to infinity, the invariant distribution $\pi ^N$ of process $Y^N$ converges to the Dirac mass at $\nu _{\rho _{\lambda}}$.

\end{prop}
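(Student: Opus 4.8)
The plan is to mirror the three-step strategy already carried out for the closed system in Proposition~\ref{prop:closed}(iii), adapting it to the open, irreducible process $Y^N$. First I would show that the infinitesimal generators $\Omega^N$ of the processes $Y^N$ converge, uniformly on $C^2$ functions $f$ over $\Y$, to the degenerate generator $\Omega f(y) = yL_y\nabla f(y)$, where $L_y$ is the $M/M/C/C$ generator with arrival rate $\lambda/(1-y_C)$ and service rate $1$. This is the standard computation of the infinitesimal mean jump: summing the jump amplitudes $\frac{1}{N}(e_{n\pm 1}-e_n)$ against their rates and letting $N\to\infty$ recovers the vector field of~\eqref{eq:closed_dyn}. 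The only delicate point is the factor $1/(1-y_C)$ in the arrival rate, which blows up near $y_C = 1$; but the subcriticality assumption $\lambda < C$ together with the argument already given in the text (the drift of $y_C$ is negative near $1$) keeps the dynamics away from $\delta_C$, so uniform convergence can be established on the relevant compact region.

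Next, invoking tightness of $(\pi^N)$ on the compact space $\mathcal P(\Y)$ and the generator convergence, it is standard that any weak limit $\pi$ of a subsequence of $(\pi^N)$ satisfies the stationarity identity $\pi\Omega = 0$, i.e. $\int_{\Y} yL_y\nabla f(y)\,\pi(dy) = 0$ for all $C^2$ functions $f$. The crucial step is then to produce a Lyapunov function to pin down the support of $\pi$. Here is where the argument departs slightly from the closed case: in the open model $[I,y]$ is no longer conserved along the flow, so $g(y) = h(y|\nu_{\rho_\lambda})$ need not be a Lyapunov function. Instead I would appeal to Proposition~\ref{prop:lyap}, applied with $K=1$ and $\varphi(x) = \lambda/(1-?)$ — more precisely, I would check that the arrival-to-service ratio $\rho(y) = \lambda/(1-y_C)$ can be written as a suitable function driving the $L_y$, and construct the corresponding $g$ with the extra $\psi$-term, so that $yL_y\nabla g(y) \le 0$ with equality only at the unique equilibrium $\nu_{\rho_\lambda}$.

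The anticipated main obstacle is precisely this construction of the Lyapunov function: unlike the $[I_k,y]$-driven case of Proposition~\ref{prop:lyap} or the conserved-$[I,y]$ case of Proposition~\ref{prop:closed}, here $\rho(y)$ depends on $y_C$ (the last coordinate) rather than on the mean $[I,y]$, so neither of the two special structures exploited earlier applies directly. I would therefore either verify by hand that $g(y) = \sum_n y_n\log(n!y_n) - \int_0^{?}\log\rho\,d(\cdots)$ with an appropriate correction term satisfies $yL_y\nabla g(y)\le 0$ via the Dirichlet-form identity~\eqref{eq:u}, or argue more softly that any invariant $\pi$ for $\Omega$ must be supported on equilibrium points and that uniqueness of the equilibrium (established above via Proposition~\ref{prop:prox}) forces $\pi = \delta_{\nu_{\rho_\lambda}}$.

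Once a valid Lyapunov function $g$ is in hand, the conclusion follows exactly as in Proposition~\ref{prop:closed}(iii): from $\pi\Omega = 0$ one deduces $\int_{\Y} yL_y\nabla g(y)\,\pi(dy) = 0$, and since the integrand is non-positive and vanishes only at $\nu_{\rho_\lambda}$, the limit $\pi$ must be the Dirac mass at $\nu_{\rho_\lambda}$. As this identifies the unique possible weak limit, compactness of $\mathcal P(\Y)$ upgrades subsequential convergence to convergence of the full sequence, giving $\pi^N \to \delta_{\nu_{\rho_\lambda}}$.
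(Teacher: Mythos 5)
Your overall scheme (generator convergence, stationarity $\pi\Omega=0$ for weak limits, a Lyapunov function to pin the support, compactness of $\mathcal P(\Y)$) matches the paper's, and you correctly identify the crux: on $\Y \setminus \{\delta _C\}$ neither special structure exploited earlier applies, since $\rho(y)=\lambda/(1-y_C)$ is a function of $y_C$ rather than of $[I,y]$, and $[I,y]$ is not conserved. But at exactly that point your proposal stalls, and neither of your two fallbacks closes the gap. The first (``verify by hand that $g$ with an appropriate correction term works'') is not a construction: no corrected $g$ is exhibited, and it is not clear that any single Lyapunov function of the type covered by Proposition~\ref{prop:lyap} exists on the enlarged state space. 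The second fallback is circular: the assertion that ``any invariant $\pi$ for $\Omega$ must be supported on equilibrium points'' is precisely what a Lyapunov function is needed to prove. Since $\Omega$ generates a deterministic flow, invariance of $\pi$ by itself only forces its support to be a flow-invariant set; in general such a measure could sit on a periodic orbit or another compact invariant set, so uniqueness of the equilibrium (via Proposition~\ref{prop:prox}) does not by itself force $\pi=\delta_{\nu_{\rho_\lambda}}$.

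The paper's resolution, which your proposal misses, is a two-stage Lyapunov argument. First, take $f(y)=[I,y]$; relation \eqref{eq:mean} (still valid here except for its last equality) gives $yL_y\nabla f(y)=\lambda-[I,y]$, hence the auxiliary functional $l(y)=([I,y]-\lambda)^2$ satisfies $yL_y\nabla l(y)=-2(\lambda-[I,y])^2\le 0$, with equality exactly on $\Y_{\lambda}$. Feeding $l$ into the stationarity identity shows that every weak limit $\pi$ of $(\pi^N)$ is supported on $\Y_{\lambda}$. Second, once the support lies in $\Y_{\lambda}$, the closed-system Lyapunov function $g(y)=h(y|\nu_{\rho_{\lambda}})$ of Proposition~\ref{prop:closed}(ii) does apply, because its Lyapunov property only uses $[I,y]=\lambda$, which holds on the support; then $\int_{\Y} yL_y\nabla g(y)\,\pi(dy)=0$ with a non-positive integrand vanishing only at $\nu_{\rho_{\lambda}}$ pins $\pi=\delta_{\nu_{\rho_{\lambda}}}$, and compactness of $\mathcal P(\Y)$ upgrades subsequential convergence to convergence of the full sequence, as you say. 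So the missing idea is the decreasing functional $l$, which splits the problem into ``concentrate any weak limit on $\Y_{\lambda}$'' plus ``run the closed-system argument on $\Y_{\lambda}$.''
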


\begin{proof}
The Lyapunov function $g(y)=h(y| \nu _{\rho _{\lambda}}) $ for the previous closed system is no longer a Lyapunov function for the present model, though the differential system is formally the same. Indeed the state space is here larger, consisting of $\Y \setminus \{\delta _C\}$ instead of $\Y_{\lambda}$, and the Lyapunov property of $g$ relied on invariance of $[I,y]$  along the dynamical system ~\eqref{eq:closed_dyn} restricted to $\Y_{\lambda}$: This is no longer valid on $\Y \setminus \{\delta _C\}$.

Nevertheless one can restrict to the set $\Y_{\lambda}$ once the following is noticed: The quantity $l(y)=([I,y]- \lambda)^2$ decreases along the flow. This results from relation ~\eqref{eq:mean} in the proof of Proposition ~\ref{prop:closed}, that remains valid except for the last equality. Here
\[ yL_y \nabla f(y)=  \lambda - [I,y]  \quad \text{ where } \quad  f(y) = [I,y] , \] 
so  that $ yL_y \nabla l(y)=  -2(\lambda -  [I,y] )^2 \le 0.$  

Arguing as in the proof of (iii) of Proposition ~\ref{prop:closed}  gives that any weak limit of the sequence $(\pi ^N)$ is supported by the set $\{y \in \Y: l(y)=0 \}$, that is $\Y_{\lambda }$. Then using  the Lyapunov function $g(y)=h(y| \nu _{\rho _{\lambda}}) $ for the system restricted to $\Y_{\lambda }$ shows as previously  that  the Dirac mass at $\nu _{\rho _{\lambda}}$ is the only weak limit for the  sequence $(\pi ^N)$, which proves convergence by compactness of $\mathcal P (\Y)$.

\end{proof}

\begin{remark} \label{remark:multiclass}
Extension to  several classes  of customers here preserves irreducibility but raises the question: Under what  condition, analogous to $\lambda<C$, should the previous results generalize?

Consider $K$ classes of customers with arrival rates $\lambda _k N$,  service rates $\mu _k$ and  capacity requirements  $A_k$ ($A_k \ge 1$) for $k=1, \dots ,K$ (customers  of class $k$  being  here again, if possible,   directed toward  some node chosen uniformly among those having $A_k$ free units of capacity, and being rejected otherwise).

Now the total number of customers is  no longer an $M/M/CN/CN$ queue: Indeed when some $A_k$ is larger than $1$, some class $k$ customer can be rejected though there are $A_k$  free units of capacity in the system, because no such volume of capacity is available at a single node.  The condition  $\sum _k A_k\frac{\lambda _k}{\mu _k} <C$  (ensuring that the $M/M/CN/CN$ queue with parameters $\lambda _k$'s, $\mu _k$'s, $A_k$'s  and $C$ is subcritical) is thus irrelevant. (As an example, consider  two classes of customers respectively requiring $A_1=3$ and $A_2=2$ units of capacity, while $C=3$. The process  is exactly the same as if  parameters were $A_1=A_2=C=1$, since each node can be occupied by at most one customer).

 The right  condition  should be $ (\lambda _k/\mu _k)_k \in \{ [I,y], y \in \stackrel{\circ} {\Y} \}$, which is equivalent to  existence of  a $ \rho $ such that $[I, \nu _{\rho }] = (\lambda _k/\mu _k)_k$, by Proposition ~\ref{prop:prox}. This condition says that  $ \lceil N \lambda _k/\mu _k \rceil$ customers of each class $k$ can be simultaneously accomodated for $N$ sufficiently large. 
But contrary to the one class case, this condition does not ensure that the  system  stays in a non blocking regime in the limit. For example in the very particular case when $A_1= \dots =  A_k =C=1$, the empirical  process   (simply consisting of the densities of customers of the different classes)   is itself a renormalized  $M/M/N/N$ queue. The previous condition writes $\sum _k \lambda _k/\mu _k <1$, which  is the subcritical regime of Kelly. The limiting dynamics is known (see  for example ~\cite{Fricker:03}) and for some values of the parameters, the trajectories  can spend  some non negligible time on  the blocking region $y_0=0$.       

\end{remark}

\end{document}